\theoremstyle{definition}
\newtheorem{theo}{Theorem}[section]
\newtheorem{pipi}[theo]{Proposition}
\newtheorem{lem}[theo]{Lemma}
\newtheorem*{deff*}{Definition}
\newtheorem*{theo*}{Theorem}
\newtheorem{ass}{Assumptions}
\newcommand \jl{{\mathbf{j}_{\geq 2}}}
\newcommand{\pt}[1]{\left( #1 \right)}
\newcommand \erre{{\mathbb{R}}}
\newcommand \enne{{\mathbb{N}}}
\newcommand \RR{{\mathbb{R}}}
\newcommand \NN{{\mathbb{N}}}
\newcommand{\Gcl}[2]{G^{#1}_{\textnormal{cl}} (#2)        }
\newcommand{\Lcl}[2]{  L^{ #1}_{\textnormal{cl}} (#2)    }
\newcommand{\molt}[1]{ \textbf{mult}\left( #1 \right)}
\newcommand{\ptg}[1]{\left\{ #1 \right\}}
\newcommand{\abs}[1]{\left\lvert #1\right\rvert}
\newcommand{\Feyn}[1]{#1\kern-0.45em/}
\begin{document}
\title[Sharp Weyl estimates for tensor products of $\psi$dos]
{Sharp Weyl estimates for tensor products\\ 
of pseudodifferential operators}

\author{Ubertino Battisti}
\address{Università di Torino, Dipartimento di Matematica `Giuseppe Peano', Torino (Italy)}
\email{ubertino.battisti@unito.it}

\author{Massimo Borsero}
\address{Università di Torino, Dipartimento di Matematica `Giuseppe Peano', Torino (Italy)}
\email{massimo.borsero@unito.it}

\author{Sandro Coriasco}
\address{Università di Torino, Dipartimento di Matematica `Giuseppe Peano', Torino (Italy)}
\email{sandro.coriasco@unito.it}


\begin{abstract}
	We study the asymptotic behavior of the counting function of tensor products 
	of operators, in the cases where the factors are either pseudodifferential operators on closed manifolds, 
	or pseudodifferential operators of Shubin type on $\erre^n$, respectively. 
	We obtain, in particular, the sharpness of the remainder term in the corresponding Weyl formulae,
	which we prove by means of the analysis of some explicit examples.  
\end{abstract}

\maketitle

\tableofcontents

\section*{Introduction}\label{sec:intro}
  Let $P$ be a positive self-adjoint operator of order $m>0$ with domain $H^m(M)\hookrightarrow L^2(M)$,
  $M$ a Riemannian, $n$-dimensional smooth closed manifold. Assume that the resolvent of $P$ is compact, 
  so that the spectrum is discrete and given by a sequence of eigenvalues 
  with finite multiplicities. Let $\{\lambda_j\}_{j \in \NN}=\sigma(P)$ be the set of 
  the eigenvalues of $P$, repeated according to their multiplicity. The counting function $N_P(\tau)$ is defined as 
  \begin{align} 
    \label{eq:weylintro}
    N_P(\tau)=\sum_{\lambda_j \in \sigma(P) \cap [0, \tau)}1= \sum_{\lambda_j <\tau} 1.
  \end{align}
  The Weyl law, see, e.g., \cite{HO68,HO04}, describes the asymptotic expansion of the counting function $N_P(\tau)$, 
  as $\tau$ goes to infinity. It is well known that that the leading
  term of the asymptotic expansion of \eqref{eq:weylintro} depends on the dimension of the 
  manifold, on the order of the operator and
  on its principal symbol, see, e.g., \cite{HO04}. Similar formulae can be obtained in many other different settings,
  see \cite{SV97} and \cite{ANS09} for a detailed analysis and several developments.
  To mention a few specific situations, see \cite{SH87,HL81} for the case of the Shubin calculus on $\RR^n$,
  \cite{BN03} for the anisotropic Shubin calculus, \cite{BC11,CM13,NI03} for the $SG$-operators on $\RR^n$ and
  the manifolds with ends, \cite{LO02} for operators on conic manifolds, \cite{MO08} for operators on cusp manifolds,
  \cite{DD13} for operators on asymptotic hyperbolic manifolds, \cite{BA12,BGPR13} for bisingular operators.
  
  In this paper we study the counting function of the tensor product of $r$ pseudodifferential
  operators. We consider the cases of 
  H\"ormander operators on closed manifolds and of the Shubin calculus on $\RR^n$.
  In the case $r=2$, for classical H\"ormander operators on closed manifolds, the operators 
  we consider are a subclass of the so-called \emph{bisingular operators}, 
  studied by L. Rodino in \cite{RO75} (see also \cite{NR06})
  in connection with the multiplicative property of the Atiyah-Singer index \cite{AS68}.
  An asymptotic expansion of the counting function of bisingular operators was obtained by the first author in 
  \cite{BA12}. The basic tool was the spectral $\zeta$-function, in the spirit of Guillemin's so-called
  \emph{soft proof} of the Weyl law \cite{GU85}. This method allows to determine the leading
  term of the asymptotic expansion in the non-symmetric case (corresponding to a simple first pole of the 
  spectral $\zeta$-function). In the symmetric case the spectral $\zeta$-function 
  has a first pole of order
  $2$. Using a theorem due to Aramaki \cite{AR88}, it has been possible to determine the leading term, which has 
  a behavior of type $\tau^p \log \tau$, as well as the second term, 
  which has a behavior of type $\tau^p$, $p$ being
  the first pole of the spectral $\zeta$-function.
  However, it was not possible, through the aforementioned method, to give a good estimate of the remainder term.
  We notice that the asymptotic behavior of the counting function in the bisingular case 
  has some similarities with the Weyl law in the setting of 
  $SG$-classical operators on manifolds with ends \cite{BC11,CM13}.

  A version of bisingular operators, based on Shubin
  pseudodifferential calculus on $\RR^n$, was introduced in \cite{BGPR13}. 
  The counting function was studied also in this setting, obtaining 
  results analogous to those which hold for the ``standard'' bisingular calculus.
     
  In this paper we consider the same class of operators studied in \cite{GPRV14}, namely,
  tensor products of $r$ pseudodifferential operators, that is
  \[
    A= A_1 \otimes \ldots \otimes A_r.
  \]
  In the sequel we will assume either that each $A_j$ is a classical H\"ormander pseudodifferential operator
  on a $n_j$-dimensional closed manifolds $M_j$, that is $A_j \in \Lcl {m_j} {M_j}$, $j=1,\dots,r$,
  or that each $A_j$ belongs to a classical global Shubin class on $\RR^{n_j}$, that is, $A_j \in \Gcl{m_j} {\RR^{n_j}}$,
  $j=1, \ldots, r$. We also assume that $A$ is positive, self-adjoint
  and Fredholm. It is straightforward to check that the Fredholm property of $A$ implies that $A_j$
  is invertible for any $j=1, \ldots , r$. 
  We illustrate here our results in the case $r=2$, see Section \ref{sec:prodn} below for the statements 
  which hold for an arbitrary number of factors.
  
  Denoting by $\sigma\pt{A_1}=\ptg{\lambda_j}_{j\in \NN}$ and 
  $\sigma\pt{A_2}=\ptg{\mu_k}_{k \in \NN}$
  the spectra of $A_1$ and $A_2$, with eigenvalues repeated according with their multiplicities, we easily obtain 
  that the spectrum of $A$ is given by
  \[
    \sigma(A)=\ptg{\lambda_j\cdot \mu_k}_{(j,k) \in \NN^2}.
  \]
  Therefore
  \begin{align}
    \label{eq:countA}
    N_{A}(\tau)=\sum_{\rho \in \sigma\pt{A}\cap [0, \tau)} 1= 
    \sum_{\lambda_j \cdot \mu_k <\tau} 1.
  \end{align}
  
  Assume that $A=A_1\otimes A_2$ is  positive, self-adjoint and Fredholm,
  with $A_1 \in \Lcl{m_1}{M_1}$, $A_2 \in \Lcl{m_2}{M_2}$, $m_1,m_2>0$,
  $\dim M_1=n_1$, $\dim M_2=n_2$, and
  $\frac{n_1}{m_1}>\frac{n_2}{m_2}$.  
  Our first main result, proved in Theorem \ref{thm:teoprin}, states that, under such assumptions,
  \begin{equation}
  \label{eq:asintshar}
    N_A(\tau) = \left\{
    \begin{array}{lcl}
    \dfrac{C_1}{n_1} \zeta\left(A_2, \dfrac{n_1}{m _1}\right) \tau^{\frac{n_1}{m_1}} + \mathcal{O}\left(\tau^{\frac{n_1-1}{m_1}}\right)  & \mbox{if} & \dfrac{n_2}{m_2} < \dfrac{n_1-1}{m_1}, \\
    \dfrac{C_1}{n_1}  \zeta\left(A_2, \dfrac{n_1}{m _1}\right) \tau^{\frac{n_1}{m_1}} + \mathcal{O}\left(\tau^{\frac{n_1-1}{m_1}} \log \tau\right) & \mbox{if} & \dfrac{n_2}{m_2} = \dfrac{n_1-1}{m_1},\\
     \dfrac{C_1}{n_1}  \zeta\left(A_2, \dfrac{n_1}{m _1}\right) \tau^{\frac{n_1}{m_1}} + \mathcal{O}\left(\tau^{\frac{n_2}{m_2}}\right)  & \mbox{if} & \dfrac{n_2}{m_2} > \dfrac{n_1-1}{m_1},
    \end{array} \right.
  \end{equation}
  for $\tau \rightarrow + \infty$. In \eqref{eq:asintshar}, $\zeta$ denotes the spectral $\zeta$-function and 
  \begin{equation*}
    C_1 = \dfrac{1}{(2\pi)^{n_1}} \int_{M_1}\int_{\mathbb{S}^{n_1-1}}
    \frac{d\theta_1 dx_1}{[a_{m_1}(x_1,\theta_1)]^{\frac{n_1}{m_1} }}  . 
  \end{equation*} 
  A similar statement holds for the tensor product of two Shubin operators with positive order.
  Moreover, using spherical harmonics, we show that the estimate \eqref{eq:asintshar}
  is sharp. 
  
  In \cite{GPRV14}, Gramchev, Pilipovi\'c, Rodino and Vindas considered the same class of operators, finding
  a slightly weaker estimate for the remainder term of the Weyl formula. Explicitely, they prove that, under the assumptions 
  stated above, 
  \[
    N_A(\tau)= \dfrac{C_1}{n_1} \zeta\left(A_2, \dfrac{n_1}{m _1}\right) \tau^{\frac{n_1}{m_1}}+ 
    \mathcal{O}(\tau^\delta)
  \]
  where $\max \ptg{\frac{n_1-1}{m_1}, \frac{n_2}{m_2}}<\delta<\frac{n_1}{m_1}$.

  The asymptotic expansion in \eqref{eq:asintshar} is related with the position
  of the first poles of the spectral $\zeta$-function associated with $A_1$ and $A_2$, 
  as sketched in the following pictures.
  
\begin{center}

\begin{tikzpicture}
 
   \draw[thick,->] (-1,0) -- (4,0) node[below] {$\mathbb{R}$};
   \draw[thick, ->] (0,-1) -- (0,3) node[left] {$i\mathbb{R}$};
   \draw [fill] (3.5,0) circle (0.06);
   \node[below] at (3.5,-0.1) {$\frac{n_1}{m_1}$};
   \draw [fill] (2.5,0) circle (0.06);
   \node[below] at (2.5,-0.1) {$\frac{n_1-1}{m_1}$};
   \draw [fill=red] (1.9,-0.1) rectangle (2.1, 0.1);
   \node[above] at (2, 0.1) {$\frac{n_2}{m_2}$};
   \node[right] at (1.8,2.6) {First two poles of $\zeta(A_1)$};
   \draw [fill] (1.6, 2.6)circle (0.06);
   \node[right] at (1.8,2.1) {First pole of $\zeta(A_2)$};
   \draw [fill=red] (1.5, 2.) rectangle  (1.7, 2.2);
   \node[above] at (0,3.2) {\large{Case $\frac{n_2}{m_2}<\frac{n_1-1}{m_2}$}};
\end{tikzpicture}

\begin{tikzpicture}
 
   \draw[thick,->] (-1,0) -- (4,0) node[below] {$\mathbb{R}$};
   \draw[thick, ->] (0,-1) -- (0,3) node[left] {$i\mathbb{R}$};
   \draw [fill] (3.5,0) circle (0.06);
   
   \node[below] at (2,-0.1) {$\frac{n_1-1}{m_1}$};
   \draw [fill=red] (1.9,-0.1) rectangle (2.1, 0.1);
   \node[below] at (3.5,-0.1) {$\frac{n_1}{m_1}$};
   \draw [fill] (2,0) circle (0.06);
   \node[above] at (2, 0.1) {$\frac{n_2}{m_2}$};
   \node[right] at (1.8,2.6) {First two poles of $\zeta(A_1)$};
   \draw [fill] (1.6, 2.6)circle (0.06);
   \node[right] at (1.8,2.1) {First pole of $\zeta(A_2)$};
   \draw [fill=red] (1.5, 2.) rectangle  (1.7, 2.2);
   \node[above] at (0,3.2) {\large{Case $\frac{n_2}{m_2}=\frac{n_1-1}{m_2}$}};
\end{tikzpicture}
  
\begin{tikzpicture}
 
   \draw[thick,->] (-1,0) -- (4,0) node[below] {$\mathbb{R}$};
   \draw[thick, ->] (0,-1) -- (0,3) node[left] {$i\mathbb{R}$};
   \draw [fill] (3.5,0) circle (0.06);
   \node[below] at (3.5,-0.1) {$\frac{n_1}{m_1}$};
   \draw [fill] (2,0) circle (0.06);
   \node[below] at (2,-0.1) {$\frac{n_1-1}{m_1}$};
   \draw [fill=red] (2.4,-0.1) rectangle (2.6, 0.1);
   \node[above] at (2.5, 0.1) {$\frac{n_2}{m_2}$};
   \node[right] at (1.8,2.6) {First two poles of $\zeta(A_1)$};
   \draw [fill] (1.6, 2.6)circle (0.06);
   \node[right] at (1.8,2.1) {First pole of $\zeta(A_2)$};
   \draw [fill=red] (1.5, 2.)rectangle  (1.7, 2.2);
   \node[above] at (0,3.2) {\large{Case $\frac{n_2}{m_2}>\frac{n_1-1}{m_2}$}};
\end{tikzpicture}

\end{center}

  The key point in the proof of our results is the following equivalence, explained in \eqref{eq:sommu}:
  \[
    N_A\pt{\tau}= \sum_{\lambda_j \cdot \mu_k<\tau}1
    = \sum_{\mu_k<\tau} N_{A_2}\pt{\frac{\tau}{\mu_k}}.
  \]
  The argument is then a careful application of the well known sharp Weyl law.
  A main aspect is the possibility to estimate the reminder term, in the Weyl law
  of $A_2$ evaluated in $\frac{\tau}{\mu_k}$, uniformly with respect to $\mu_k$.
  
  The paper is organized as follows. In Section \ref{sec:taub}, we shortly recall the
  Weyl laws in the case of the H\"ormander calculus on closed manifolds
  and of the Shubin calculus on $\RR^n$. We also study the asymptotic behavior of
  the sum 
  \[
   \sum_{\mu_k< \tau} \frac{1}{\mu_k^c}
  \]
  for different ranges of $c\in\RR$, where $\ptg{\mu_k}_{k\in \NN}$ is the spectrum of an operator in the calculus
  we consider. In Section \ref{sec:asy}, we prove our main results in the case of tensor products 
  of two factors.
  In Section \ref{sec:prodn}, we extend the results to the case of tensor products of $r>2$ factors. 
  In Section \ref{sec:contro},
  we show that our estimates of the remainder term of the Weyl law are sharp, focusing again on the case
  of tensor products of two factors. Finally, we collect in the Appendix some remarks concerning
  the connection of this analysis with lattice problems, in particular with
  the Dirichlet divisor problem in the classic setting and in the anisotropic case.
  
\section*{Acknowledgements}
We wish to thank L. Rodino, S. Pilipovi\'c, F. Nicola, and J. Seiler, for useful discussions
and comments.
The first author has been supported by the Gruppo Nazionale per l'Analisi
Matematica, la Probabilit\`a e le loro Applicazioni (GNAMPA) of the Istituto
Nazionale di Alta Matematica (INdAM).
The second and third author have been partially supported by the Gruppo 
Nazionale per l'Analisi Matematica, la Probabilit\`a e le loro Applicazioni 
(GNAMPA) of the Istituto Nazionale di Alta Matematica (INdAM).

%
\section{Preliminary Results}\label{sec:taub}

We recall well known results on the sharp Weyl law in the case of operators on closed manifolds and
of operators of Shubin type on $\RR^n$, see, e.g., Hormander \cite{HO68},
Hellfer, Robert \cite{HL81}, see also \cite{He84}.
\begin{theo}[Sharp Weyl law]
 Let $A$ be a positive self-adjoint elliptic classical
 pseudodifferential operator in $\Lcl{m}{M}$, with $M$ a closed manifold of dimension
 $n$, and let $\sigma\pt{A}=\ptg{\lambda_j}_{j \in \NN}$ be its spectrum. Then,
 \begin{equation}
   \label{eq:weylcpt}
   N_A(\lambda)= \sum_{\lambda_j < \lambda} 1= \frac{C_A}{n} \lambda^{\frac{n}{m}}+R_A\pt{\lambda},
 \end{equation}
where
\[
  C_A= \frac{1}{\pt{2 \pi}^{n}}\int_{M}\int_{\mathbb{S}^{n-1}}\frac{d\theta dx}{[a_m\pt{x, \theta}]^{\frac{n}{m}}}, 
\]
with $a_m$ the principal homogeneous symbol of $A$,
and
\begin{align}
  \nonumber
  &\limsup_{\lambda \to +\infty} \frac{\abs{N_{A}\pt{\lambda} - \frac{C_A}{n} \lambda^{\frac{n}{m}}}}
  {\lambda^{\frac{n-1}{m}}}\\
  \label{eq:restocpt}
  &=\limsup_{\lambda \to + \infty} \frac{\abs{R_A\pt{\lambda}}}{\lambda^{\frac{n-1}{m}}}
  <+\infty.
\end{align}
Analogously, let $P\in \Gcl{m}{\RR^{n}}$ be a positive self-adjoint elliptic classical
pseudodifferential operator of Shubin type on $\RR^n$ with $m>0$, and let $\sigma\pt{P}=\ptg{\mu_k}_{k \in \NN}$
be its spectrum. Then,
\begin{equation}
  \label{eq:weylshubin}
  N_P\pt{\lambda}= \sum_{\mu_k<\lambda}1= \frac{K_P}{2n} \lambda^{\frac{2n}{m}}+ R_P\pt{\lambda},
\end{equation}
where
\[
  K_P= \frac{1}{\pt{2\pi}^{2n}} \int_{\mathbb{S}^{2n-1}}\frac{ d\theta}{[p_{m}\pt{\theta}]^{\frac{2n}{m}}},
\]
with $p_m$ the principal homogeneous symbol of $P$,
and
\[
\limsup_{\lambda \to + \infty} \frac{\abs{N_{P}\pt{\lambda} - \frac{K_P}{2n} \lambda^{\frac{2n}{m}}}}
  {\lambda^{\frac{2n-1}{m}}}
  =\limsup_{\lambda \to +\infty} \frac{\abs{R_P\pt{\lambda}}}{\lambda^{\frac{2n-1}{m}}}
  <+\infty.
\]
\end{theo}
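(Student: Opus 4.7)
The plan is to follow the classical Hörmander--Duistermaat--Guillemin wave-trace strategy, reducing first to a first-order operator and then combining a Fourier integral operator parametrix with a sharp Tauberian theorem. First I would replace $A$ by $Q = A^{1/m}$; by Seeley's theorem on complex powers of elliptic operators, $Q$ is a positive self-adjoint classical pseudodifferential operator of order $1$ on $M$ with principal symbol $q_1 = a_m^{1/m}$ and spectrum $\ptg{\lambda_j^{1/m}}$. The counting function transforms as $N_A\pt{\lambda} = N_Q\pt{\lambda^{1/m}}$, so \eqref{eq:restocpt} is equivalent to the sharp Weyl law $N_Q\pt{\mu} = \frac{C_A}{n} \mu^n + O\pt{\mu^{n-1}}$ as $\mu \to +\infty$.

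Next I would construct a parametrix for the half-wave propagator $U(t) = e^{-itQ}$ as a Fourier integral operator on $M \times M$ valid on a time interval $\pt{-T, T}$ with $T>0$ sufficiently small, the canonical relation being the graph of the Hamiltonian flow of $q_1$. Taking the operator trace yields the tempered distribution $\sigma(t) = \sum_{j} e^{-it \lambda_j^{1/m}}$, whose restriction to $\pt{-T, T}$ is a Lagrangian distribution with conormal singularity of order $n$ at $t=0$; a direct computation in local symplectic coordinates identifies the leading coefficient with $C_A$. I would then apply Hörmander's Tauberian theorem: choosing $\rho \in \Sw\pt{\erre}$ even and nonnegative with $\widehat{\rho}$ compactly supported in $\pt{-T,T}$ and $\rho\pt{0} = 1$, Plancherel pairing against $\sigma$ shows that $\pt{dN_Q} \ast \rho\pt{\mu}$ is asymptotic to $C_A \, \mu^{n-1}$; since $dN_Q$ is a positive measure, the Tauberian lemma upgrades this to $\abs{N_Q\pt{\mu} - \frac{C_A}{n} \mu^n} = O\pt{\mu^{n-1}}$, which is \eqref{eq:restocpt}.

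The main technical obstacle is the uniform construction of $U(t)$ as a FIO on a fixed interval $\pt{-T,T}$ and the verification that its trace has a conormal singularity at $t=0$ of \emph{exactly} the predicted order $n$: this sharpness is what distinguishes the result from the weaker $o\pt{\mu^{n-1}}$ bound obtainable by heat-kernel or resolvent methods alone, and is ultimately responsible for the $\mu^{n-1}$ remainder. For the Shubin case $P\in\Gcl{m}{\erre^n}$ the same blueprint applies within the global Shubin pseudodifferential and Fourier integral operator calculus on $\erre^n$: the principal symbol $p_m$ is homogeneous of degree $m$ on $T^*\erre^n \cong \erre^{2n}$, the Hamiltonian flow of $P^{1/m}$ sweeps out the sphere $\mathbb{S}^{2n-1}$, and the wave-trace singularity at $t=0$ integrates over $\mathbb{S}^{2n-1}$, yielding the constant $K_P$ and, via the same Tauberian step, the remainder $O\pt{\lambda^{\pt{2n-1}/m}}$. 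Alternatively one may invoke the symplectic equivalence between the Shubin calculus on $\erre^n$ and a classical pseudodifferential calculus on a suitable compactification, reducing the second statement directly to the first.
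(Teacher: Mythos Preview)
Your sketch is correct and follows the classical H\"ormander wave-trace argument, but you should be aware that the paper does not actually prove this theorem: it is stated in Section~\ref{sec:taub} as a well-known preliminary result, with references to H\"ormander \cite{HO68} for the closed-manifold case and Helffer--Robert \cite{HL81} (see also \cite{He84}) for the Shubin case, and no proof is given. Your outline is essentially the content of those references, so there is nothing to compare against within the paper itself.
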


The next Propositions \ref{pipi:asin} and \ref{pipi:asinSh}
will be crucial in our proof of the Weyl law with sharp remainder for tensor products.
They follow as consequence of well known properties of the spectra of positive self-adjoint operators. 
We examine in detail only the case of H\"ormander pseudodifferential operators on closed manifold, 
since the argument for the case of Shubin operators is similar.

\begin{pipi} \label{pipi:asin}
Let $M$ be a closed manifold of dimension $n$, and $A \in \Lcl{m}{M}$, $m > 0$, be 
elliptic, positive and self-adjoint,
with spectrum $\sigma(A) = \{\mu_k\}_{k \in \enne}$. Define
  \begin{equation}
    \label{eq:Asint}
    F_A(\tau,c)=\sum_{\mu_k < \tau} \frac{1}{\mu_k^c} = \left\{
    \begin{array}{lcl}
      F_1\pt{\tau}  & \mbox{if} & c > \dfrac{n}{m}, \\
      F_2\pt{\tau}  & \mbox{if} & c = \dfrac{n}{m},\rule{0mm}{7mm} \\
      F_3\pt{\tau}  & \mbox{if} & c < \dfrac{n}{m}.\rule{0mm}{7mm}
    \end{array} \right.
  \end{equation}
Then, 
\[
  \limsup_{\tau \to +\infty} \frac{\zeta(A,c)-F_1\pt{\tau}}{\tau^{\frac{n}{m}-c}}= \kappa_1,\,
  \limsup_{\tau \to +\infty} \frac{F_2\pt{\tau}}{\log \tau} = \kappa_2,\,
  \limsup_{\tau \to +\infty} \frac{F_3\pt{\tau}}{\tau^{\frac{n}{m}-c}} = \kappa_3,
\]
for suitable positive constants $\kappa_1$, $\kappa_2$, $\kappa_3$. That is,  for $\tau\to +\infty$,
\[
\zeta(A,c)-F_1(\tau) = \mathcal{O}\pt{\tau^{\frac{n}{m}-c}}, \; F_2(\tau) = \mathcal{O}\pt{\log \tau}, \;
F_3(\tau) = \mathcal{O}\pt{\tau^{\frac{n}{m}-c}}.
\]
\end{pipi}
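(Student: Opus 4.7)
The plan is to reduce everything to the sharp Weyl law \eqref{eq:weylcpt}--\eqref{eq:restocpt} by means of Abel summation (Stieltjes integration by parts). Assuming without loss of generality that $\tau\notin\sigma(A)$ -- $F_A(\cdot,c)$ being piecewise constant -- one writes
\[
  F_A(\tau,c)=\int_{0}^{\tau}t^{-c}\,dN_A(t)=\tau^{-c}\,N_A(\tau)+c\int_{\mu_1}^{\tau}t^{-c-1}N_A(t)\,dt,
\]
the boundary term at $0$ vanishing since $N_A(t)=0$ on $[0,\mu_1)$. I would then substitute $N_A(t)=\frac{C_A}{n}\,t^{n/m}+R_A(t)$, with $R_A(t)=\mathcal{O}(t^{(n-1)/m})$, splitting the right-hand side into a \emph{principal} and a \emph{remainder} contribution, to be analysed separately in the three cases.

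In Case 2 ($c=n/m$) the principal integral produces $\frac{C_A}{m}\log\tau$, the boundary term is bounded, and the remainder integral converges because its integrand is $\mathcal{O}(t^{-1/m-1})$; hence $F_2(\tau)=\frac{C_A}{m}\log\tau+\mathcal{O}(1)$, so $\kappa_2=C_A/m$. In Case 3 ($c<n/m$) the principal integral gives $\frac{c\,C_A}{n(n/m-c)}\,\tau^{n/m-c}$, which, combined with the boundary contribution $\frac{C_A}{n}\,\tau^{n/m-c}$, yields the leading constant $\frac{C_A}{n-cm}$; the remainder integral, upon distinguishing the three subcases $c<(n-1)/m$, $c=(n-1)/m$, $c>(n-1)/m$, is of strictly lower order than $\tau^{n/m-c}$ in each, and so $\kappa_3=C_A/(n-cm)$.

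In Case 1 ($c>n/m$) the series $\zeta(A,c)=\sum_{k}\mu_k^{-c}$ converges (this is immediate from $\mu_k\asymp k^{m/n}$, which follows from the Weyl law), so I would estimate the tail
\[
  \zeta(A,c)-F_A(\tau,c)=\sum_{\mu_k\geq\tau}\mu_k^{-c}=\int_{\tau}^{\infty}t^{-c}\,dN_A(t)
\]
by the analogous integration by parts on $[\tau,\infty)$; the boundary term at infinity vanishes precisely because $c>n/m$. The same substitution then produces the leading order $\frac{C_A}{mc-n}\,\tau^{n/m-c}$ with an $\mathcal{O}(\tau^{(n-1)/m-c})$ correction, so $\kappa_1=C_A/(mc-n)$.

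The main -- but modest -- obstacle is the bookkeeping of boundary terms together with the three-way subcase analysis of the remainder integral in Case 3; no new ideas beyond Abel summation are required. The Shubin analogue announced immediately afterwards follows verbatim, replacing $n/m$ with $2n/m$ and $C_A/n$ with $K_P/(2n)$, and using \eqref{eq:weylshubin} in place of \eqref{eq:weylcpt}.
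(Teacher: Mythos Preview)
Your argument is correct, and it is in fact a bit sharper than the paper's: by integrating by parts against the full Weyl asymptotic $N_A(t)=\frac{C_A}{n}\,t^{n/m}+\mathcal{O}(t^{(n-1)/m})$ you obtain genuine limits (not just finite $\limsup$'s) and identify the constants $\kappa_j$ explicitly.

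The paper proceeds differently. Instead of Abel summation, it first passes to the order-one operator $B=A^{1/m}$ so that $\sigma(B)=\{\mu_k^{1/m}\}$, and then invokes the \emph{unit-interval} eigenvalue bound
\[
  N_B(\lambda+1)-N_B(\lambda)=\mathcal{O}(\lambda^{n-1}),
\]
a well-known consequence of the sharp Weyl law for first-order operators. Writing $F_A(\tau,c)$ (respectively $\zeta(A,c)-F_1(\tau)$) as a Stieltjes integral $\int \mu^{-cm}\,dN_B(\mu)$ over $[1,\tau^{1/m}]$ (respectively $[\tau^{1/m},\infty)$), the paper dominates it by a Riemann sum over integer steps, $\sum_j j^{-cm}(N_B(j+1)-N_B(j))\lesssim\sum_j j^{n-1-cm}$, and then compares this discrete sum with the corresponding integral. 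This yields only the $\mathcal{O}$-bounds claimed in the statement, without the explicit values of the $\kappa_j$, and requires a separate (easy) treatment of $c>0$ versus $c\le0$ because of the monotonicity of $\mu\mapsto\mu^{-cm}$ used when passing from integral to sum.

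In short: both approaches rest on the sharp Weyl law, but the paper trades your integration-by-parts and three-subcase analysis of the remainder integral for a reduction to order one plus a coarser discrete comparison; your route is slightly more informative and no longer.
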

\begin{proof}
  If $c>\frac{n}{m}$ it is immediate that the series $\sum_{k=0}^{\infty} \frac{1}{\mu_k^c}$ is convergent, 
  in view of the holomorphic properties of the spectral $\zeta$-function associated with $A$.
  To prove the asymptotic properties of $\zeta(A,c)-F_1(\tau)$, we switch to
  $B=A^{1/m}$, so that the order of $B$ is one and $\sigma(B)={\mu_k^{1/m}}$.
  We have
  \begin{align}
  \nonumber
   \zeta(A,c)- F_1(\tau)&= \sum_{\mu_k \geq \tau }\frac{1}{\mu_k^c}
      = \sum_{\mu_k^{1/m}\geq \tau^{1/m} }\frac{1}{\pt{\mu_k^{1/m}}^{c\,m}}\\
   \label{eq:diff}
   &= \int_{\tau^{1/m}}^{+\infty}\frac{1}{\mu^{c\,m}} dN_B(\mu).
  \end{align}
  Since $B$ is of order one, it is well known that
  \begin{align}
    \label{eq:grigis}
    N_B(\lambda+1)- N_B(\lambda)\leq
    \sharp \ptg{\sigma(B) \cap[\lambda, \lambda+1] }= \mathcal{O}(\lambda^{n-1}), \lambda\to+\infty  
  \end{align}
  (see, e.g., \cite[\S~12]{GS94}). Using \eqref{eq:grigis} and the properties of Stieltjes integral, 
  we obtain,
  for $\tau\to+\infty$,
  \begin{align*}
   \zeta(A,c)- F_1(\tau)&= \int_{ \tau^{1/m}}^{+\infty} \frac{1}{\mu^{c \, m}} dN_{B}(\mu)\\
            &\leq \sum_{j=[\tau^{1/m}]-1}^{\infty}  \sup_{\mu\in [j, j+1]} 
	   \pt{\frac{1}{\mu^{c\,m}}} \pt{N_B(j+1)- N_B(j)}\\
            &\leq \kappa \sum_{j=[\tau^{1/m}]-1}^{\infty} \frac{1}{j^{c\,m-n+1}}\\
            &\leq \kappa\int_{[\tau^{1/m}]-1}^{+\infty} \frac{1}{\pt{t-1}^{c\,m-n+1}} dt\\
            &=\kappa\frac{1}{c\,m-n} [\tau^{1/m}-2]^{n-m\,c} \in \mathcal{O}\pt{\tau^{\frac{n}{m}-c}}.
  \end{align*}
  where $[a]$ denotes the minimum integer such that $[a]\geq a$.
  
 To prove the results for $F_2$ and $F_3$
  we can assume, without loss of generality, that $\mu_0=\tilde{\mu}_0=1$.
  Using again the properties of the Stieltjes integral, we write
  \[
    F_A(\tau,c)=\!\int_{1}^{\tau^{1/m}}\hspace{-2mm} \frac{1}{\mu^{c \, m}} dN_{B}(\mu)\leq \sum_{j=1}^{[\tau^{1/m}]} \sup_{\mu\in [j, j+1]} 
    \pt{\frac{1}{\mu^{c\,m}}} \pt{N_B(j+1)- N_B(j)}.
  \]
  Let us initially suppose that $c>0$, so that $\frac{1}{x^c}$ is a decreasing function on $[1,+\infty)$. 
  In view of \eqref{eq:grigis}, we have
  \begin{align}
      \nonumber
      \int_{1}^{\tau^{1/m}} \frac{1}{\mu^{c \, m}} dN_{B}(\mu)&
      \leq \sum_{j=1}^{[\tau^{1/m}]} \frac{1}{j^{c\,m}}\mathcal{O}(j^{n-1})
      \leq \widetilde{\kappa} \sum_{j=1}^{[\tau^{1/m}]} \frac{1}{j^{c\,m-n+1}}\\
							    \label{eq:dec}
							    &\leq \widetilde{\kappa} 
							    \pt{\int_1^{[\tau^{1/m}]} t^{n-c\,m-1}dt+1}.
  \end{align}
  By integration, we find
  \[
   F_A(\tau,c)=\int_{1}^{\tau^{1/m}} \frac{1}{\mu^{c \, m}} dN_{B}(\mu) \leq \left\{
      \begin{array}{lcl}
      \dfrac{\widetilde{\kappa}_1}{n-c\,m} \tau^{\frac{n}{m}-c}   & \mbox{if} & 0<c < \dfrac{n}{m}, \\
      \dfrac{\widetilde{\kappa}_2}{m} \log \tau  & \mbox{if} & c = \dfrac{n}{m},\rule{0mm}{7mm}
    \end{array} \right. 
  \]
  as claimed. Finally, if $c\le0$, then $\frac{1}{\mu^c}$ is a non-decreasing function and also in this case, similarly to 
  \eqref{eq:dec}, we obtain
  \[
    F_A(\tau,c)\leq \kappa \int_1^{[\tau^{1/m}]} \pt{x+1}^{n-c\,m-1}dx\leq 
    \frac{ \widetilde{\kappa}_3} {n- c\,m} \tau^{\frac{n}{m}-c}.
  \]
  The proof is complete.
\end{proof}
\begin{pipi}\label{pipi:asinSh}
Let $P \in \Gcl{m}{\RR^n}$ be an elliptic, positive and self-adjoint Shubin operator of order $m>0$, with
spectrum given by $\sigma(P) = \{\lambda_j\}_{j \in \enne}$. 
Define
  \begin{equation}
    \label{eq:Asintsh}
    F_P(\tau,c)=\sum_{\lambda_j < \tau} \frac{1}{\lambda_j^c} = \left\{
    \begin{array}{lcl}
      F_1\pt{\tau}  & \mbox{if} & c > \dfrac{2n}{m}, \\
      F_2\pt{\tau}  & \mbox{if} & c = \dfrac{2n}{m},\rule{0mm}{7mm} \\
      F_3\pt{\tau}  & \mbox{if} & c < \dfrac{2n}{m}.\rule{0mm}{7mm}
    \end{array} \right. 
  \end{equation}
Then, 
\[
  \limsup_{\tau \to +\infty} \frac{\zeta(P,c)- F_1\pt{\tau}}{\tau^{\frac{2n}{m}-c}} = {\kappa}_1,\,
  \limsup_{\tau \to +\infty} \frac{F_2\pt{\tau}}{\log \tau} = {\kappa}_2,\,
  \limsup_{\tau \to +\infty} \frac{F_3\pt{\tau}}{\tau^{\frac{2n}{m}-c}} = {\kappa}_3,
\]
for suitable positive constants $\kappa_1$, $\kappa_2$, $\kappa_3$. That is, for $\tau\to+\infty$, 
\[
\zeta(P,c)-F_1(\tau) = \mathcal{O}\pt{\tau^{\frac{2n}{m}-c}},\;
F_2(\tau) = \mathcal{O}\pt{\log \tau},\;
F_3(\tau) = \mathcal{O}\pt{\tau^{\frac{2n}{m}-c}}.
\]
\end{pipi}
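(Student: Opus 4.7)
The plan is to mirror the proof of Proposition \ref{pipi:asin} verbatim, with the dimension $n$ consistently replaced by $2n$, since \eqref{eq:weylshubin} shows that for Shubin operators the Weyl asymptotics scale as $\lambda^{2n/m}$ (phase space has dimension $2n$). First I would pass to the order-one auxiliary operator $Q = P^{1/m}$ with spectrum $\{\lambda_j^{1/m}\}$, and rewrite
\[
F_P(\tau,c) = \int_{1}^{\tau^{1/m}} \frac{dN_Q(\mu)}{\mu^{cm}}, \qquad \zeta(P,c) - F_P(\tau,c) = \int_{\tau^{1/m}}^{+\infty} \frac{dN_Q(\mu)}{\mu^{cm}},
\]
after the harmless normalization $\lambda_0 \geq 1$.

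The sole input that must be adapted is the Shubin-type analogue of \eqref{eq:grigis}, namely
\[
N_Q(\lambda + 1) - N_Q(\lambda) = \mathcal{O}(\lambda^{2n-1}), \qquad \lambda \to +\infty,
\]
which follows directly by applying \eqref{eq:weylshubin} to $Q$: the difference of the leading $\lambda^{2n}$-terms contributes $\mathcal{O}(\lambda^{2n-1})$ by the mean value theorem, while the remainder is already of order $\lambda^{2n-1}$ by the sharp Weyl law for order-one Shubin operators.

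Granted this bound, the three cases are handled as in Proposition \ref{pipi:asin}. For $c > 2n/m$, the tail Stieltjes integral is majorized by the discrete sum $\sum_{j \geq [\tau^{1/m}]-1} j^{2n-1-cm}$, comparison with the corresponding integral then yields $\mathcal{O}(\tau^{2n/m-c})$. For $c < 2n/m$, splitting into the subcases $c > 0$ and $c \leq 0$ according to the monotonicity of $\mu \mapsto \mu^{-c}$ (exactly as in the previous proof), the analogous finite sum is controlled by an integral of order $\tau^{2n/m-c}$. Finally, for $c = 2n/m$, the sum reduces to $\sum_{j=1}^{[\tau^{1/m}]} j^{-1} = \mathcal{O}(\log \tau^{1/m}) = \mathcal{O}(\log \tau)$. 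I do not expect a genuine new obstacle: the only step requiring attention is the one-step counting bound for $Q$ above, which is immediate from \eqref{eq:weylshubin}; the remaining computations are purely routine and identical to those in the closed manifold case.
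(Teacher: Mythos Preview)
Your proposal is correct and is precisely the approach the paper intends: the paper does not spell out a proof of Proposition~\ref{pipi:asinSh}, stating instead (just before Proposition~\ref{pipi:asin}) that the argument for Shubin operators is similar to the closed-manifold case. Your plan---pass to $Q=P^{1/m}$, use the one-step counting bound $N_Q(\lambda+1)-N_Q(\lambda)=\mathcal{O}(\lambda^{2n-1})$ derived from the sharp Shubin Weyl law~\eqref{eq:weylshubin}, and then repeat the Stieltjes-integral estimates of Proposition~\ref{pipi:asin} with $n$ replaced by $2n$---is exactly that similar argument.
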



\section{Spectral asymptotics for the tensor product of two operators}\label{sec:asy}
We start considering the case of the tensor product of $2$ operators.
Let $M_1, M_2$ be two compact manifolds of dimension $n_1,n_2$, respectively. 
Let $A= A_1 \otimes A_2$, $A_j \in \Lcl {m_j} {M_j}$, $m_j>0$, $j=1,2$.
Assume that the spectra of $A_1$ and $A_2$ are sequences of eigenvalues, and set
\begin{align*}
	\sigma(A_1) = \{\lambda_j\}_{j \in \enne}, \qquad \sigma(A_2) = \{\mu_k\}_{k \in \enne}, 
\end{align*}
so that
\begin{equation*}
	\sigma(A) = \{\lambda_j\cdot \mu_k : \lambda_j \in \sigma(A_1), \mu_k \in \sigma(A_2)\}.
\end{equation*}
For simplicity, we start with the case $m_1=1$ and $n_1> \frac{n_2}{m_2}$.
Let $c$ be an arbitrary positive constant and $B$ an operator with spectrum
$\sigma(B)=\ptg{\mu_k}_{k\in \NN}$,
then $\sigma\!\pt{c\, B}= \ptg{c\cdot \mu_k}_{k \in \NN}$. There is a simple and useful formula relating 
the counting functions $N_{c\;B}$ and $N_{B}$, namely 
\begin{align}
  \label{eq:prodspectrum}
  N_{c \, B}(\tau)= \sum_{c\cdot \mu_k<\tau}1= \sum_{\mu_k<\frac{\tau}{c}}1= 
  N_{B}\!\pt{\frac{\tau}{c}}.
\end{align}
In particular, \eqref{eq:prodspectrum} implies that,
without loss of generality, we can assume\footnote{In fact,
if that condition were not true, we could consider the operator $c^2 A$, with
$c=(\min\{\lambda_j, \mu_k\} - \varepsilon)^{-1}$, $\varepsilon > 0$ small enough.} 
$\lambda_j > 1$ and $\mu_k>1$ for all $j,k$.
Let us now summarize the hypotheses on the factors $A_1$, $A_2$.
\begin{ass}
 \label{assu1st} 
  \begin{align*}
    &\text{$M_1,M_2$ smooth closed manifolds of dimensions $n_1,n_2$, respectively};
    \\
    &A=A_1\otimes A_2, \; A_1\in \Lcl{1}{M_1}, A_2\in \Lcl{m_2}{M_2}, \quad m_2>0, n_1>\frac{n_2}{m_2};
     \\
    & A_1,A_2 \text{ positive, self-adjoint, elliptic};
    \\
    &\sigma\pt{A_1}=\ptg{\lambda_j}_{j\in \NN}, \, \sigma\pt{A_2}=\ptg{\mu_k}_{k\in \NN},
     \quad \lambda_j>1, \mu_k>1,\text{ for all } j, k.
  \end{align*}
\end{ass}

Since $\lambda_j, \mu_k > 1$ for all $j,k$, using \eqref{eq:prodspectrum}, we 
have\footnote{Recall that $\lambda_j>1$ for all $j$. In the first term 
of \eqref{eq:sommu} we can reduce the summation to $\mu_k<\tau$
since, otherwise, we would have $ \lambda_k \cdot \mu_k\geq \tau$ for all $k$, and the second summation would
be zero.}
\begin{align}
\nonumber
N_{A} (\tau)&= \sum_{\lambda_j \cdot \mu_k<\tau} 1= 
\sum_{\mu_k <\tau}\pt{ \sum_{ \lambda_j \cdot \mu_k< \tau} 1} =\\
\label{eq:sommu}
&= 
\sum_{\mu_k<\tau} N_{\mu_k A_1}(\tau)
= \sum_{\mu_k<\tau} N_{A_1}\pt{\frac{\tau}{\mu_k}}.
\end{align}
\begin{pipi} \label{pipi:iunouno}
Let $A$, $A_1$ and $A_2$ be as in Assumptions \ref{assu1st}. Then,
\begin{align*}
  N_{A} (\tau) = \sum_{\mu_k< \tau } \pt{ 
  \frac{C_1}{n_1} \pt{ \frac{\tau}{\mu_k} }^{n_1} + \frac{1}{\mu_k^{n_1-1} } r_k(\tau) },
\end{align*}
with
\begin{align}
  \label{eq:C1}
  C_1= \frac{1}{\pt{2 \pi}^{n_1}}\int_{M_1}\int_{\mathbb{S}^{n_1-1}}
  \frac{d\theta_1 dx_1}{[a_{m_1}(x_1,\theta_1)]^{\frac{n_1}{m_1}}},
\end{align}
and  $r_k(\tau)$ is $\mathcal{O}\pt{\tau^{n_1-1}}$, uniformly with respect to $\mu_k$. That is,
there exists a positive constant $C$ such that
\begin{align}
  \label{eq:unifbound}
   |r_k(\tau)|\leq C \tau^{n_1-1}, \quad \text{for all } k \in \NN.
\end{align}
\end{pipi}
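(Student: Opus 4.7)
The plan is to start from the identity \eqref{eq:sommu}, which rewrites $N_A(\tau) = \sum_{\mu_k < \tau} N_{A_1}(\tau/\mu_k)$, and then substitute into each summand the sharp Weyl expansion \eqref{eq:weylcpt} for the factor $A_1$. Since by Assumptions \ref{assu1st} we have $m_1=1$, this expansion reads
\[
  N_{A_1}(\lambda) = \frac{C_1}{n_1}\,\lambda^{n_1} + R_{A_1}(\lambda),
\]
with $C_1$ as in \eqref{eq:C1}. Setting $\lambda = \tau/\mu_k$ and defining $r_k(\tau) := \mu_k^{n_1-1}\, R_{A_1}(\tau/\mu_k)$ immediately produces the claimed decomposition of $N_A(\tau)$. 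The nontrivial content is therefore concentrated in the uniform estimate \eqref{eq:unifbound}.

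To establish \eqref{eq:unifbound}, I would first upgrade the sharp remainder bound \eqref{eq:restocpt}, which is stated as a $\limsup$, into a pointwise inequality valid on the whole half-line $\lambda \ge 1$. From \eqref{eq:restocpt} there exist $C_0>0$ and $\lambda_0 \ge 1$ such that $|R_{A_1}(\lambda)| \le C_0\, \lambda^{n_1-1}$ for all $\lambda \ge \lambda_0$. On the compact interval $[1,\lambda_0]$ the function $R_{A_1}$ is bounded (since $N_{A_1}$ is a nondecreasing step function taking only finitely many values there, and $\lambda^{n_1}$ is bounded), while $\lambda^{n_1-1} \ge 1$, so the ratio $|R_{A_1}(\lambda)|/\lambda^{n_1-1}$ is bounded there as well. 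Taking $C$ to be the maximum of these two bounds gives $|R_{A_1}(\lambda)| \le C\,\lambda^{n_1-1}$ for every $\lambda \ge 1$.

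With this pointwise bound at hand, I would apply it with $\lambda = \tau/\mu_k$: the assumption $\mu_k > 1$ (arranged via the rescaling \eqref{eq:prodspectrum}) combined with the summation constraint $\mu_k < \tau$ guarantees $\tau/\mu_k > 1$, so the bound applies. One then reads off
\[
  |r_k(\tau)| = \mu_k^{n_1-1}\,|R_{A_1}(\tau/\mu_k)| \le \mu_k^{n_1-1}\cdot C \pt{\frac{\tau}{\mu_k}}^{n_1-1} = C\,\tau^{n_1-1},
\]
uniformly in $k$, which is exactly \eqref{eq:unifbound}. The main subtlety, and the step I expect to be the only non-routine one, is precisely this upgrade of the asymptotic $\limsup$ estimate to a bound valid down to $\lambda = 1$: without it, the values of $\tau/\mu_k$ corresponding to $\mu_k$ near $\tau$ would lie outside the regime where \eqref{eq:restocpt} is directly applicable, and the uniformity in $k$ would fail. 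The lower bound $\mu_k > 1$ baked into Assumptions \ref{assu1st} is exactly what is needed to stay inside the region where this strengthened pointwise estimate is available.
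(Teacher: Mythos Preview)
Your proposal is correct and follows essentially the same route as the paper: start from \eqref{eq:sommu}, insert the Weyl expansion \eqref{eq:weylcpt} for $A_1$, define $r_k(\tau)=\mu_k^{n_1-1}R_{A_1}(\tau/\mu_k)$, and use $\tau/\mu_k>1$ together with the pointwise bound $|R_{A_1}(t)|\le \kappa\,t^{n_1-1}$ for $t>1$ to obtain \eqref{eq:unifbound}. The only difference is that you spell out the upgrade from the $\limsup$ statement \eqref{eq:restocpt} to a bound valid on all of $[1,\infty)$ via compactness on $[1,\lambda_0]$, whereas the paper simply asserts this implication.
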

\begin{proof}
 By \eqref{eq:sommu} we have
 \[
  N_A(\tau)= \sum_{\mu_k <\tau} N_{A_1}\pt{\frac{\tau}{\mu_k}}.
 \]
 Using \eqref{eq:weylcpt}, we can write
 \begin{equation}
  \label{eq:som}
   N_A(\tau)= \sum_{\mu_k<\tau} \pt{\frac{C_1}{n_1} \frac{\tau^{n_1}}{\mu_k^{n_1}} +R_A\pt{\frac{\tau}{\mu_k}}}.
 \end{equation}
 Equation \eqref{eq:restocpt} implies that
 \[
    |R_A(t)|\leq \kappa t^{n_1-1}, \quad t>1, 
 \]
 for a suitable constant $\kappa$. Since $\mu_k<\tau\Rightarrow\frac{\tau}{\mu_k}>1$ in the summation 
 \eqref{eq:som}, we can write
 \[
   \abs{R_A\pt{\frac{\tau}{\mu_k}}}\leq C \pt{\frac{\tau}{\mu_k}}^{n_1-1}.
 \]
 Hence, setting
 \[
  r_k(\tau)=\mu_k^{n_1-1} R_A\pt{\frac{\tau}{\mu_k}},
 \]
 we have the assertion.

\end{proof}
\begin{lem}\label{lem:iuno}
Let $A,A_1,A_2$ be as in Assumptions \ref{assu1st}, and assume $n_1>\dfrac{n_2}{m_2}$. Then
we have, for $\tau \rightarrow +\infty$, 
\begin{equation*}
N_{A}(\tau) = \left\{
\begin{array}{lcl}
\dfrac{C_1}{n_1} \zeta\left(A_2, n_1\right) \tau^{n_1} + \mathcal{O}(\tau^{n_1-1})
& \mbox{if} & \dfrac{n_2}{m_2} < n_1-1, \\
\dfrac{C_1}{n_1} \zeta\left(A_2, n_1\right) \tau^{n_1} + 
\mathcal{O}\left(\tau^{n_1-1}\log \tau\right)
& \mbox{if} & \dfrac{n_2}{m_2} = n_1-1, \rule{0mm}{7mm} \\
\dfrac{C_1}{n_1} \zeta\left(A_2, n_1\right) \tau^{n_1} + 
\mathcal{O}\left(\tau^{\frac{n_2}{m_2}}\right)
& \mbox{if}  & \dfrac{n_2}{m_2} > n_1-1, \rule{0mm}{7mm}
\end{array} \right. 
\end{equation*}
where $C_1$ is given by \eqref{eq:C1}.
\end{lem}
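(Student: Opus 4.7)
The plan is to start from the expression for $N_A(\tau)$ furnished by Proposition \ref{pipi:iunouno} and split it into a ``main'' sum and a ``remainder'' sum, namely
\[
N_A(\tau) = \frac{C_1}{n_1}\,\tau^{n_1} \sum_{\mu_k<\tau}\frac{1}{\mu_k^{n_1}} + \sum_{\mu_k<\tau}\frac{r_k(\tau)}{\mu_k^{n_1-1}},
\]
where the $r_k(\tau)$ satisfy the uniform bound \eqref{eq:unifbound}. The whole proof is then an application of Proposition \ref{pipi:asin} to the spectrum of $A_2$, once with exponent $c=n_1$ (for the main sum) and once with $c=n_1-1$ (for the remainder sum), followed by the comparison of the three resulting error terms.

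For the main sum, the hypothesis $n_1>\frac{n_2}{m_2}$ places us in the first alternative of \eqref{eq:Asint} with $c=n_1$, so that $\sum_{\mu_k<\tau}\mu_k^{-n_1}=\zeta(A_2,n_1)+\mathcal{O}\bigl(\tau^{\frac{n_2}{m_2}-n_1}\bigr)$. Multiplying by $\frac{C_1}{n_1}\tau^{n_1}$ yields the asserted leading term plus a contribution of size $\mathcal{O}(\tau^{n_2/m_2})$. For the remainder sum, the uniform estimate \eqref{eq:unifbound} gives
\[
\Bigl|\sum_{\mu_k<\tau}\frac{r_k(\tau)}{\mu_k^{n_1-1}}\Bigr|\le C\,\tau^{n_1-1}\sum_{\mu_k<\tau}\frac{1}{\mu_k^{n_1-1}},
\]
and the sum on the right is governed by Proposition \ref{pipi:asin} with $c=n_1-1$: it is bounded by $\zeta(A_2,n_1-1)$ when $n_1-1>\frac{n_2}{m_2}$, by $\mathcal{O}(\log\tau)$ when $n_1-1=\frac{n_2}{m_2}$, and by $\mathcal{O}\bigl(\tau^{\frac{n_2}{m_2}-(n_1-1)}\bigr)$ when $n_1-1<\frac{n_2}{m_2}$. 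Multiplying back by $\tau^{n_1-1}$ gives remainders of order $\tau^{n_1-1}$, $\tau^{n_1-1}\log\tau$, and $\tau^{n_2/m_2}$ respectively.

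It only remains to take the worse between this remainder and the $\mathcal{O}(\tau^{n_2/m_2})$ coming from the main sum in each of the three regimes. If $\frac{n_2}{m_2}<n_1-1$, the dominant error is $\tau^{n_1-1}$; if $\frac{n_2}{m_2}=n_1-1$, the logarithmic term wins and one gets $\tau^{n_1-1}\log\tau$; if $n_1-1<\frac{n_2}{m_2}<n_1$, both contributions are $\tau^{n_2/m_2}$. This reproduces exactly the trichotomy of the statement.

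There is no real obstacle: the whole argument is a bookkeeping of exponents. The one delicate point to check carefully is that the bound on $r_k(\tau)$ is \emph{uniform in $k$} (otherwise one could not pull $\tau^{n_1-1}$ out of the sum), but this is precisely the content of \eqref{eq:unifbound} in Proposition \ref{pipi:iunouno}, which in turn rests on the sharp remainder estimate \eqref{eq:restocpt} being valid for all sufficiently large argument of $R_{A_1}$, together with the reduction $\tau/\mu_k>1$ enabled by the normalization $\mu_k>1$ in Assumptions \ref{assu1st}.
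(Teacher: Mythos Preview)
Your proposal is correct and follows essentially the same route as the paper's own proof: both start from Proposition~\ref{pipi:iunouno}, split into the main sum $\frac{C_1}{n_1}\tau^{n_1}F_{A_2}(\tau,n_1)$ and the remainder sum controlled via \eqref{eq:unifbound} by $C\tau^{n_1-1}F_{A_2}(\tau,n_1-1)$, and then invoke Proposition~\ref{pipi:asin} with $c=n_1$ and $c=n_1-1$ respectively to handle the three regimes. The paper merely phrases each case as an explicit $\limsup$ computation, while you summarize the bookkeeping more compactly; the substance is the same.
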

\begin{proof}
Using Proposition \ref{pipi:iunouno} we obtain
\begin{equation*}
N_{A} (\tau) = \sum_{\mu_k< \tau } \pt{ \frac{C_1}{n_1} \pt{ \frac{\tau}{\mu_k}}^{n_1} +
\frac{1} {{\mu_k}^{n_1-1} } r_k \pt{\tau} }, 
\end{equation*}
where $r_k(\tau)$ is uniformly $\mathcal{O}\pt{\tau^{n_1-1}}$ for $\tau\to+\infty$, in the sense of 
\eqref{eq:unifbound}. We can then write
\begin{align}
\nonumber
&\hspace*{-1cm}
\abs{N_A(\tau)- \frac{C_1}{n_1} \zeta(A_2, n_1) \tau^{n_1}} \\
\nonumber
&=\abs{\sum_{\mu_k < \tau}\pt{  \frac{C_1}{n_1} \frac{\tau^{n_1}}{\mu_k^{n_1}} +
\frac{1}{\mu_k^{n_1-1}} r_k (\tau^{n_1-1})}- \frac{C_1}{n_1} \zeta(A_2, n_1) \tau^{n_1} }\\
\nonumber
&\leq 
 \frac{C_1}{n_1} \tau^{n_1}
 \abs{ F_{A_2}(\tau,n_1) - \zeta(A_2, n_1) }+
 \abs{\sum_{\mu_k < \tau} \frac{1}{\mu_k^{n_1-1}} r_k (\tau^{n_1-1})}\\
\label{eq:mainest}
&\leq 
 \frac{C_1}{n_1} \tau^{n_1}
 \abs{ F_{A_2}(\tau,n_1) - \zeta(A_2, n_1) }+
 C\tau^{n_1-1} F_{A_2}(\tau,n_1-1). 
\end{align} 
Let us start with the case $n_1-1 > \frac{n_2}{m}$. Using \eqref{eq:mainest}, we find 
\begin{align*}
&\hspace*{-1cm}\limsup_{\tau \rightarrow +\infty} 
\frac{\abs{N_A(\tau)- \frac{C_1}{n_1} \zeta(A_2, n_1) \tau^{n_1}}}{\tau^{n_1-1}} \\
 \leq &\frac{C_1}{n_1}\limsup_{\tau\to +\infty} \tau \abs{\zeta(A_2, n_1)- F_{A_2}(\tau,n_1)}  +
 C\limsup_{\tau \to +\infty} F_{A_2}(\tau,n_1-1). 
\end{align*}
Since
\[
  n_1>n_1-1> \frac{n_2}{m_2}\Rightarrow \frac{n_2}{m_2}-n_1<-1,
\]
$\zeta(A_2,n_1)-F_1(\tau) = \mathcal{O}\pt{\tau^{\frac{n_2}{m_2} -n_1}}$ 
for $\tau\to+\infty$, in view of Proposition \ref{pipi:asin}. It follows that
\[
 \limsup_{\tau \to +\infty} \tau |\zeta(A_2,n_1)-F_1(\tau)|\leq 
 \widetilde{C} \limsup_{\tau \to +\infty} \tau^{\frac{n_2}{m_2}-n_1+1}=0,
\]
which implies
\[
\limsup_{\tau \rightarrow +\infty} 
\frac{\abs{N_A(\tau)- \frac{C_1}{n_1} \zeta(A_2, n_1) \tau^{n_1}}}{\tau^{n_1-1}}
\leq  C \limsup_{\tau\to+\infty} F_{A_2}(\tau,n_1-1)=C\zeta(A_2, n_1-1).
\]
Since $n_1-1>\frac{n_2}{m_2}$, $\zeta(A_2, n_1-1)$ is finite, and we have the desired assertion.

In the case $n_1-1 = \frac{n_2}{m_2}$, from \eqref{eq:mainest} we analogously get
\begin{align*}
  &\hspace*{-0.4cm}\limsup_{\tau \rightarrow +\infty} 
  \frac{\abs{N_A(\tau)- \frac{C_1}{n_1} \zeta(A_2, n_1)
  \tau^{n_1}}}{\tau^{n_1-1}\, \log \tau} \\
  &\leq \frac{C_1}{n_1}\limsup_{\tau\to +\infty} \frac{\tau}{\log \tau} \abs{\zeta(A_2, n_1)- F_{A_2}(\tau,n_1)}  +
 C \limsup_{\tau \to +\infty}\frac{ 1}{\log \tau} F_{A_2}\!\!\pt{\tau,\frac{n_2}{m_2}}. 
\end{align*}
Since  $n_1>n_1-1=\frac{n_2}{m_2}$, in view of Proposition \ref{pipi:asin} we find
\[
\zeta(A_2,n_1)-F_1(\tau) = \mathcal{O}\pt{\tau^{-1}},
\quad
F_{A_2}\pt{\tau, \frac{n_2}{m_2}}=F_2(\tau)= \mathcal{O}\pt{\log\tau},
\]
so that
\[
  \limsup_{\tau \rightarrow +\infty} 
  \frac{\abs{N_A(\tau)- \frac{C_1}{n_1} \zeta(A_2, n_1) \tau^{n_1}}}{\tau^{n_1-1} \log \tau} 
 \leq  \widetilde{C}, 
\]
as claimed. 

Finally, in the case $n_1-1<\frac{n_2}{m_2}$, \eqref{eq:mainest} gives
\begin{align*}
&\hspace*{-2cm}\limsup_{\tau \rightarrow +\infty} 
\frac{\abs{N_A(\tau)- \frac{C_1}{n_1} \zeta(A_2, n_1) \tau^{n_1}}}{\tau^{\frac{n_2}{m_2}}} \\
 \leq \frac{C_1}{n_1}&\limsup_{\tau\to +\infty} \tau^{n_1-\frac{n_2}{m_2}}\abs{\zeta(A_2, n_1)- F_{A_2}(\tau,n_1)}  +\\
 C&\limsup_{\tau \to +\infty}
 \tau^{n_1-1-\frac{n_2}{m_2} }  F_{A_2}(\tau,n_1-1). 
\end{align*}
Since $n_1>\frac{n_2}{m_2}>n_1-1$, Proposition \ref{pipi:asin} implies
\[
\zeta(A_2, n_1)- F_1(\tau) =\mathcal{O}\pt{\tau^{\frac{n_2}{m_2} -n_1}},
\quad
F_{A_2}\pt{\tau, n_1-1}=F_3\pt{\tau} = \mathcal{O}\pt{\tau^{\frac{n_2}{m}-n_1+1}}.
\]
Therefore,
\[
	\limsup_{\tau \rightarrow +\infty} 
	\frac{\abs{N_A(\tau)- \frac{C_1}{n_1} \zeta(A_2, n_1) \tau^{n_1}}}{\tau^{\frac{n_2}{m_2}}} <+\infty.
\]
The proof is complete.
\end{proof}
\noindent We can now prove our main result.
\begin{theo}
\label{thm:teoprin}
Let $M_1, M_2$ be two closed manifolds of dimension $n_1$, $n_2$, respectively. 
Let $A= A_1 \otimes A_2$, where $A_j \in \Lcl {m_j}{M_j}$,  $m_j>0$, $j=1, 2$, are
positive, self-adjoint, invertible operators, with $\dfrac{n_1}{m_1} > \dfrac{n_2}{m_2}$. Then,
for $\tau \rightarrow + \infty$,
\begin{equation*}
N_A(\tau) = \left\{
\begin{array}{lcl}
\dfrac{C_1}{n_1} \zeta\left(A_2, \dfrac{n_1}{m _1}\right) \tau^{\frac{n_1}{m_1}} + \mathcal{O}\left(\tau^{\frac{n_1-1}{m_1}}\right)  & \mbox{if} & \dfrac{n_2}{m_2} < \dfrac{n_1-1}{m_1}, \\
\dfrac{C_1}{n_1}  \zeta\left(A_2, \dfrac{n_1}{m _1}\right) \tau^{\frac{n_1}{m_1}} + \mathcal{O}\left(\tau^{\frac{n_1-1}{m_1}}
\log \tau\right)  & \mbox{if} & \dfrac{n_2}{m_2} = \dfrac{n_1-1}{m_1}, \rule{0mm}{7mm}\\
\dfrac{C_1}{n_1}  \zeta\left(A_2, \dfrac{n_1}{m _1}\right) \tau^{\frac{n_1}{m_1}} + \mathcal{O}\left(\tau^{\frac{n_2}{m_2}}\right)  & \mbox{if} & \dfrac{n_2}{m_2} > \dfrac{n_1-1}{m_1}, \rule{0mm}{7mm} 
\end{array} \right. 
\end{equation*}
where $C_1$ is given by \eqref{eq:C1}.
%
%
\end{theo}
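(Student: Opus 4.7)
The plan is to reduce the general case $m_1>0$ to Lemma~\ref{lem:iuno}, which already handles the special case $m_1=1$, by passing to the $(1/m_1)$-th power of each factor. Specifically, I would set $\widetilde{A}_1 = A_1^{1/m_1}$ and $\widetilde{A}_2 = A_2^{1/m_1}$. By Seeley's theorem on complex powers of positive elliptic classical pseudodifferential operators, $\widetilde{A}_1 \in \Lcl{1}{M_1}$ with principal symbol $[a_{m_1}(x_1,\theta_1)]^{1/m_1}$, while $\widetilde{A}_2 \in \Lcl{m_2/m_1}{M_2}$; both are positive, self-adjoint, and elliptic, with spectra $\{\lambda_j^{1/m_1}\}_{j\in\NN}$ and $\{\mu_k^{1/m_1}\}_{k\in\NN}$, respectively. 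Since the two factors of $A=A_1\otimes A_2$ commute and share a joint eigenbasis, the functional calculus gives $(A_1\otimes A_2)^{1/m_1} = \widetilde{A}_1\otimes\widetilde{A}_2$, and therefore
\[
  N_A(\tau) \;=\; N_{\widetilde{A}_1\otimes\widetilde{A}_2}\!\pt{\tau^{1/m_1}}.
\]

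Next, I would check that Assumptions~\ref{assu1st} hold for the pair $(\widetilde{A}_1,\widetilde{A}_2)$: their orders are $\widetilde{m}_1=1$ and $\widetilde{m}_2=m_2/m_1>0$, and the required inequality $\widetilde{n}_1>\widetilde{n}_2/\widetilde{m}_2$ becomes $n_1>n_2 m_1/m_2$, which is exactly the standing hypothesis $n_1/m_1>n_2/m_2$; the normalization $\mu_k^{1/m_1}>1$ is inherited from $\mu_k>1$. Applying Lemma~\ref{lem:iuno} to $\widetilde{A}_1\otimes\widetilde{A}_2$ at the argument $\tau^{1/m_1}$, the three regimes are now controlled by the comparison of $\widetilde{n}_2/\widetilde{m}_2=n_2 m_1/m_2$ with $\widetilde{n}_1-1=n_1-1$; dividing through by $m_1$, these translate precisely into the three cases $n_2/m_2 \lessgtr (n_1-1)/m_1$ of the theorem.

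It then remains to reconcile constants and exponents. From $\widetilde{a}_1=[a_{m_1}]^{1/m_1}$ one obtains $\widetilde{C}_1 = C_1$ as given by \eqref{eq:C1}; from $\sigma(\widetilde{A}_2)=\{\mu_k^{1/m_1}\}$ one gets $\zeta(\widetilde{A}_2,n_1)=\sum_k \mu_k^{-n_1/m_1}=\zeta(A_2,n_1/m_1)$, which is finite precisely because $n_1/m_1>n_2/m_2$ exceeds the first pole of $\zeta(A_2,\cdot)$; and the exponent identities $(\tau^{1/m_1})^{n_1}=\tau^{n_1/m_1}$, $(\tau^{1/m_1})^{n_1-1}=\tau^{(n_1-1)/m_1}$, $\log\tau^{1/m_1} = m_1^{-1}\log\tau$, and $(\tau^{1/m_1})^{n_2 m_1/m_2}=\tau^{n_2/m_2}$ reproduce exactly the three asymptotic formulae in the statement. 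The only genuinely delicate point is justifying the operator-level factorization $(A_1\otimes A_2)^{1/m_1} = A_1^{1/m_1}\otimes A_2^{1/m_1}$ together with the classicality and symbol computation for $A_j^{1/m_1}$; once these standard facts are in place, the result is pure bookkeeping applied to Lemma~\ref{lem:iuno}.
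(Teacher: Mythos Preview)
Your proposal is correct and follows essentially the same approach as the paper: reduce to $m_1=1$ by taking the $(1/m_1)$-th power of $A$ (the paper cites \cite{BA12} for this standard reduction, while you spell out the symbol and $\zeta$-function bookkeeping explicitly), normalize the eigenvalues to be $>1$, and then invoke Lemma~\ref{lem:iuno}. The only step you leave slightly implicit is the scaling argument ensuring all eigenvalues exceed~$1$, which the paper addresses separately via \eqref{eq:prodspectrum}.
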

\begin{proof}
Without loss of generality, we can assume $m_1 = 1$, possibly considering an 
appropriate  
power of $A$, see \cite{BA12}. Moreover, again without loss of the generality,
we can assume that all the 
eigenvalues are strictly larger than one, so that the Assumptions \ref{assu1st}
are fulfilled. Then, the claim follows from Lemma \ref{lem:iuno}. 
\end{proof}
The case of the tensor product of two Shubin operators can be treated in a completely similar fashion,
using Proposition \ref{pipi:asinSh} in place of Proposition \ref{pipi:asin},
and the Weyl law \eqref{eq:weylshubin} which holds in this setting.
\begin{theo}
\label{thm:teoprinbis}
Let $P= P_1 \otimes P_2$ and $P_j \in \Gcl {m_j}{\RR^{n_j}}$,  $m_j>0$, $j=1,2$, be
positive, self-adjoint, invertible operators, with $\dfrac{2n_1}{m_1} > \dfrac{2n_2}{m_2}$. Then,
for $\tau \rightarrow + \infty$,
\begin{equation*}
N_P(\tau) = \left\{
\begin{array}{lcl}
\dfrac{K_1}{2n_1} \zeta\left(P_2, \dfrac{2n_1}{m _1}\right) \tau^{\frac{2n_1}{m_1}} + 
\mathcal{O}\left(\tau^{\frac{2n_1-1}{m_1}}\right)  & 
\mbox{if} & \dfrac{2 n_2}{m_2} < \dfrac{2n_1-1}{m_1},\\
\dfrac{K_1}{2n_1}  \zeta\left(P_2, \dfrac{2n_1}{m_1}\right) \tau^{\frac{2n_1}{m_1}} + 
 \mathcal{O}\left(\tau^{\frac{2n_1-1}{m_1}} \log \tau\right) 
& \mbox{if} & \dfrac{2 n_2}{m_2} = \dfrac{2n_1-1}{m_1},  \rule{0mm}{7mm}\\
\dfrac{K_1}{2n_1}  \zeta\left(P_2, \dfrac{2n_1}{m_1}\right) \tau^{\frac{2n_1}{m_1}} + 
\mathcal{O}\left(\tau^{\frac{2n_2}{m_2}}\right)  & \mbox{if} & 
\dfrac{2 n_2}{m_2} > \dfrac{2n_1-1}{m_1},  \rule{0mm}{7mm} 
\end{array} \right.
\end{equation*}
 where 
\begin{equation*}
K_1 = \dfrac{1}{(2\pi)^{2 n_1}} \int_{\mathbb{S}^{2n_1-1}} 
\frac{d\theta_1}{[p_{m_1}(\theta_1)]^{\frac{2 n_1}{m_1}}}. 
\end{equation*}
\end{theo}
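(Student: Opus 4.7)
The plan is to mirror the argument used to establish Theorem \ref{thm:teoprin}, replacing each ingredient by its Shubin-calculus counterpart. First, I would perform the same preliminary reductions: by considering $P_1^{1/m_1}\otimes P_2^{1/m_1}$ in place of $P$ (which only rescales the variable $\tau\mapsto\tau^{m_1}$ in the counting function) I may assume $m_1=1$, so that the hypothesis $\frac{2n_1}{m_1}>\frac{2n_2}{m_2}$ becomes $2n_1>\frac{2n_2}{m_2}$; and by multiplying $P$ by a sufficiently large constant and invoking \eqref{eq:prodspectrum}, I may assume $\lambda_j,\mu_k>1$ for all indices. Under these normalizations the three cases of the theorem correspond exactly to the positions $\frac{2n_2}{m_2}<2n_1-1$, $=2n_1-1$, $>2n_1-1$.

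Next, I would use the key decomposition
\[
N_{P}(\tau)=\sum_{\mu_k<\tau} N_{P_1}\!\pt{\frac{\tau}{\mu_k}},
\]
obtained exactly as in \eqref{eq:sommu}, and plug in the sharp Shubin Weyl law \eqref{eq:weylshubin} (with $m_1=1$), giving
\[
N_P(\tau)=\sum_{\mu_k<\tau}\ptq{\frac{K_1}{2n_1}\pt{\frac{\tau}{\mu_k}}^{2n_1}+R_{P_1}\!\pt{\frac{\tau}{\mu_k}}}.
\]
Since $\mu_k<\tau$ implies $\tau/\mu_k>1$, the remainder estimate in \eqref{eq:weylshubin} furnishes a constant $\kappa$ with $|R_{P_1}(\tau/\mu_k)|\le\kappa(\tau/\mu_k)^{2n_1-1}$ uniformly in $k$. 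Writing $r_k(\tau)=\mu_k^{2n_1-1}R_{P_1}(\tau/\mu_k)$ yields the Shubin analogue of Proposition \ref{pipi:iunouno}, namely the expansion
\[
\abs{N_P(\tau)-\frac{K_1}{2n_1}\zeta(P_2,2n_1)\,\tau^{2n_1}}\le \frac{K_1}{2n_1}\tau^{2n_1}\abs{\zeta(P_2,2n_1)-F_{P_2}(\tau,2n_1)}+C\,\tau^{2n_1-1}F_{P_2}(\tau,2n_1-1).
\]

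Finally, I would split into the three cases as in Lemma \ref{lem:iuno}, but invoking Proposition \ref{pipi:asinSh} in place of Proposition \ref{pipi:asin}. When $\frac{2n_2}{m_2}<2n_1-1$ both $\zeta(P_2,2n_1-1)$ and $\zeta(P_2,2n_1)$ are finite and the first tail decays faster than $\tau^{-1}$, leaving a $\mathcal{O}(\tau^{2n_1-1})$ bound; when $\frac{2n_2}{m_2}=2n_1-1$, the second sum is $\mathcal{O}(\log\tau)$ and contributes the $\tau^{2n_1-1}\log\tau$ factor, while the tail $|\zeta(P_2,2n_1)-F_1(\tau)|=\mathcal{O}(\tau^{-1})$ is absorbed; when $\frac{2n_2}{m_2}>2n_1-1$, Proposition \ref{pipi:asinSh} gives $|\zeta(P_2,2n_1)-F_1(\tau)|=\mathcal{O}(\tau^{\frac{2n_2}{m_2}-2n_1})$ and $F_{P_2}(\tau,2n_1-1)=\mathcal{O}(\tau^{\frac{2n_2}{m_2}-2n_1+1})$, both of which produce a $\mathcal{O}(\tau^{\frac{2n_2}{m_2}})$ remainder. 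Unwinding the initial $m_1=1$ reduction recovers the stated exponents $\tfrac{2n_1-1}{m_1}$ and $\tfrac{2n_2}{m_2}$. The only nontrivial point in the argument is the \emph{uniform} control of $R_{P_1}(\tau/\mu_k)$ in $k$, which, as in the compact case, is already encoded in the sharp Weyl estimate \eqref{eq:weylshubin}; the remainder of the proof is a bookkeeping translation of Lemma \ref{lem:iuno} into the Shubin language.
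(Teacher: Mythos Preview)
Your proposal is correct and follows precisely the approach indicated in the paper: the paper does not give a separate detailed proof of Theorem~\ref{thm:teoprinbis}, but simply remarks that the argument for Theorem~\ref{thm:teoprin} carries over verbatim, using the Shubin Weyl law \eqref{eq:weylshubin} in place of \eqref{eq:weylcpt} and Proposition~\ref{pipi:asinSh} in place of Proposition~\ref{pipi:asin}. Your write-up spells out exactly this transfer, including the reduction to $m_1=1$ via powers of $P$, the normalization of the eigenvalues, the decomposition $N_P(\tau)=\sum_{\mu_k<\tau}N_{P_1}(\tau/\mu_k)$, the uniform control of the Shubin remainder, and the three-case analysis via Proposition~\ref{pipi:asinSh}.
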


 \section{Spectral asymptotics for the tensor product of $r$ operators}\label{sec:prodn}
As in the previous sections, to avoid redundancy we will prove in detail our results for tensor products of $r$ factors
only in the case of operators belonging to the H\"ormander calculus on closed manifolds. We will then omit
the proof of the analogous Theorem \ref{thm:teoprinndim} for the case of operators belonging to the Shubin calculus,
which can be obtained by similar arguments.

The main tool in the study of the  extension of Theorem \ref{thm:teoprin} to the product of $r\ge2$ factors is a refined
version of Proposition \ref{pipi:asin}. Let us first state the hypotheses.
\begin{ass}\label{assu2nd}
  \begin{align*}
    &\text{$M_1,\ldots,M_r$ smooth closed manifolds of dimensions $n_1,\dots,n_r$, respectively};
    \\
    &A=A_1\otimes \cdots\otimes A_r, \quad  A_j\in \Lcl{m_j}{M_j}, \, m_j>0, \quad j=1,\ldots,r; \\
    & A_j \text{ positive, self-adjoint, elliptic}, \quad j=1,\ldots,r;
    \\
    &\sigma\pt{A_j}=\ptg{{^j}\mu_{k_j}}_{k_j\in \NN}, 
     \quad {^j}\mu_1>1, \quad     j=1,\ldots,r.
  \end{align*}
\end{ass}

  \begin{pipi}\label{pipi:asinndim}
  Let $A,A_j$, $j=1, \ldots,r$, be as in Assumptions \ref{assu2nd}.
  Set
  \[
   	p= \max\ptg{\frac{n_1}{m_1},\dots,\frac{n_r}{m_r}}, \quad S=\ptg{j\in\{1,\dots,r\} \colon \frac{n_j}{m_j}=p}, 
	\quad s=\sharp S,
  \]
  and define, for $\tau\to+\infty$,
  \[
  	F_A(\tau,c)=\sum_{{^1} \mu_{k_1} \,\cdot \, \ldots \, \cdot\, {^r} \mu_{k_r}<\tau} \frac{1}{
      ({^1}\mu_{k_1})^c \cdot \ldots \cdot ({^r}\mu_{k_r})^c}
      =\begin{cases}
      	F_1(\tau)&\text{if } p<c,
	\\
	F_2(\tau)&\text{if } p=c,
	\\
	F_3(\tau)&\text{if } p>c.
      \end{cases}
  \]
  Then,  
\begin{align*}
    &\limsup_{\tau\to +\infty} \frac{ \abs{\prod_{j=1}^r \zeta(A_j, c)-F_1(\tau)}}{\tau^{p-c}\, \pt{\log \tau}^{s-1}}= \kappa_1, \\
    &\limsup_{\tau\to +\infty} \frac{F_2\pt{\tau}}{\pt{\log\tau}^{s}}=\kappa_2,\\
    &\limsup_{\tau\to +\infty} \frac{F_3\pt{\tau}}{\tau^{p-c}\pt{\log \tau}^{s-1}}=\kappa_3,    
\end{align*}
 that is, for $\tau\to+\infty$,
 \[
 	\prod_{j=1}^r \zeta(A_j, c)-F_1(\tau) =\mathcal{O}\pt{\tau^{p-c}\pt{\log \tau}^{s-1}},
 \]
 \[
 	F_2(\tau) =\mathcal{O}\pt{\pt{\log\tau}^{s}},\;
 	F_3(\tau) = \mathcal{O}\pt{\tau^{p-c}\pt{\log\tau}^{s-1} }.
 \]
\end{pipi}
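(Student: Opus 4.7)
The plan is to argue by induction on $r$, with the base case $r=1$ being exactly Proposition \ref{pipi:asin}. For the inductive step, I would first reorder the factors so that $\frac{n_r}{m_r}=\min_j \frac{n_j}{m_j}$; then, setting $A'=A_1\otimes\cdots\otimes A_{r-1}$ and denoting by $p',s'$ the parameters associated with $A'$, this convention guarantees $p'=p$ and either $s'=s$ (when $\frac{n_r}{m_r}<p$, i.e.\ $r\notin S$) or $s'=s-1$ (when $\frac{n_r}{m_r}=p$, which forces $\frac{n_j}{m_j}=p$ for every $j$ and hence $s=r$). This choice is what makes the induction close cleanly.

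The main workhorse is the identity
\[
F_A\pt{\tau,c}=\sum_{{^r}\mu_{k_r}<\tau}\frac{1}{({^r}\mu_{k_r})^c}\,F_{A'}\!\pt{\frac{\tau}{{^r}\mu_{k_r}},c},
\]
which holds because every ${^j}\mu_{k_j}>1$, in direct analogy with \eqref{eq:sommu}. In the case $c>p$ I would further split
\begin{align*}
\prod_{j=1}^r\zeta(A_j,c)-F_A(\tau,c) &= \pt{\zeta(A_r,c)-F_{A_r}(\tau,c)}\prod_{j<r}\zeta(A_j,c) \\
&\quad+\sum_{{^r}\mu_{k_r}<\tau}\frac{1}{({^r}\mu_{k_r})^c}\ptq{\prod_{j<r}\zeta(A_j,c)-F_{A'}\!\pt{\frac{\tau}{{^r}\mu_{k_r}},c}}.
\end{align*}
The first summand is $\mathcal{O}\pt{\tau^{n_r/m_r-c}}=\mathcal{O}\pt{\tau^{p-c}}$ by Proposition \ref{pipi:asin}. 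For the second, the inductive hypothesis bounds the bracketed quantity by $K\,t^{p-c}(\log t)^{s'-1}$ with $t=\tau/{^r}\mu_{k_r}$; bounding $\log(\tau/{^r}\mu_{k_r})\le\log\tau$ and pulling out $\tau^{p-c}(\log\tau)^{s'-1}$ leaves the one-variable sum $\sum_{{^r}\mu_{k_r}<\tau}({^r}\mu_{k_r})^{-p}$, which by Proposition \ref{pipi:asin} is $\mathcal{O}(1)$ when $\frac{n_r}{m_r}<p$ and $\mathcal{O}(\log\tau)$ when $\frac{n_r}{m_r}=p$; in either sub-case this yields the desired $\mathcal{O}\pt{\tau^{p-c}(\log\tau)^{s-1}}$.

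The cases $c=p$ and $c<p$ should be more direct: I would insert the inductive bounds $F_{A'}(t,p)=\mathcal{O}((\log t)^{s'})$, respectively $F_{A'}(t,c)=\mathcal{O}(t^{p-c}(\log t)^{s'-1})$, into the key identity above, and then perform exactly the same bookkeeping (replace $\log(\tau/{^r}\mu_{k_r})$ by $\log\tau$, extract the factor $\tau^{p-c}$ when applicable, and apply Proposition \ref{pipi:asin} to the remaining one-variable sum $\sum({^r}\mu_{k_r})^{-p}$). This should produce $\mathcal{O}((\log\tau)^s)$ and $\mathcal{O}\pt{\tau^{p-c}(\log\tau)^{s-1}}$ respectively. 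The main obstacle I anticipate is the careful accounting of logarithmic factors: the extra $\log\tau$ produced by the inner sum precisely when $r\in S$ must exactly compensate the drop from $s$ to $s'=s-1$ in the inductive hypothesis, and this matching is the very reason for peeling off the factor with smallest $n_j/m_j$ last.
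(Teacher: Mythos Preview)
Your induction is correct and complete, but it is organized differently from the paper's argument. The paper does \emph{not} induct uniformly on $r$: for $p=c$ it uses the crude product bound $F_A(\tau,c)\le\prod_j F_{A_j}(\tau,c)$ directly, splitting $j\in S$ from $j\notin S$; for $p>c$ it peels off a factor with the \emph{largest} ratio $\frac{n_1}{m_1}=p$, so that the Stieltjes integral $\int_1^{\tau/\ldots}\mu^{-c}\,dN_{A_1}$ produces the power $\tau^{p-c}$ in one shot and the remaining $(r-1)$-fold sum is handled by the already-established $p=c$ case; only for $p<c$ does the paper induct on $r$, and again it peels off a largest-ratio factor. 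Your choice of removing the \emph{smallest}-ratio factor is the opposite convention, and the reason it works is exactly the one you identify: the inner $(r-1)$-fold block retains the parameters $p'=p$ and $s'\in\{s,s-1\}$, so the inductive hypothesis matches the target bound after the outer one-variable sum $\sum({^r}\mu_{k_r})^{-p}$ contributes either $O(1)$ or one more $\log\tau$. The payoff of your route is a single uniform scheme for all three cases, whereas the paper's route gives a more elementary, induction-free proof of the $p=c$ case at the cost of a slightly more intricate argument for $p<c$ (splitting the tail $\prod_{j\ge2}{^j}\mu_{k_j}\gtrless\tau$). One small point to make explicit in your write-up: in the case $c>p$ the inductive bound for $\prod_{j<r}\zeta(A_j,c)-F_{A'}(t,c)$ is asymptotic in $t$, so the terms with ${^r}\mu_{k_r}$ close to $\tau$ (i.e.\ $t=\tau/{^r}\mu_{k_r}$ bounded) should be handled separately by the trivial bound $\prod_{j<r}\zeta(A_j,c)$ together with $\sum_{{^r}\mu_{k_r}\ge\tau/t_0}({^r}\mu_{k_r})^{-c}=\mathcal{O}(\tau^{n_r/m_r-c})$.
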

\begin{proof} We will make use of the straightforward inequality
\begin{equation}
      \label{eq:ineq2nd}
	F_A(\tau,c) = 
	\sum_{{^1} \mu_{k_1} \,\cdot \, \ldots \, \cdot\, {^r} \mu_{k_r}<\tau} 
	\frac{1}{({^1}\mu_{k_1})^c \cdot \ldots \cdot ({^r}\mu_{k_r})^c}
	\leq \prod_{j=1}^r \,\sum_{{^j} \mu_{k_j} <\tau} \frac{1}{({^j}\mu_{k_j})^c},
\end{equation}
as well as of the following consequence of the absolute convergence of the involved series,
\begin{equation}
      \label{eq:zetaseries}
	 \prod_{j=1}^r \zeta(A_j,c)= \lim_{\tau\to+\infty}\prod_{j=1}^r \,\sum_{{^j} \mu_{k_j} <\tau} \frac{1}{({^j}\mu_{k_j})^c}
	=\sum \frac{1}{({^1}\mu_{k_1})^c \cdot \ldots \cdot ({^r}\mu_{k_r})^c},
\end{equation}
where $c$ belongs to the holomorphic domain of the functions $\zeta(A_j,z)$, $j=1,\dots,r$, and
the last summation in \eqref{eq:zetaseries} is taken on all the $r$-tuples of eigenvalues
$({^1}\mu_{k_1},\dots,{^r}\mu_{k_r})\in\sigma(A_1)\oplus\cdots\oplus\sigma(A_r)$.

 \begin{itemize}
 \item[\underline{Case $p=c$.}] Let us split the last term in \eqref{eq:ineq2nd} as
    \[
	\prod_{j=1}^r \,\sum_{{^j} \mu_{k_j} <\tau} \frac{1}{({^j}\mu_{k_j})^c}
      = \pt{\prod_{j\not \in S}\, \sum_{{^j} \mu_{k_j} <\tau} \frac{1}{({^j}\mu_{k_j})^c}} \cdot 
      \pt{\prod_{t \in S}\, \sum_{{^t} \mu_{k_t} <\tau} \frac{1}{({^t}\mu_{k_t})^c}}.
    \]
    Recalling that $\frac{n_j}{m_j}<c$ for all $j \notin S$, that is, $c$ belongs to the holomorphic domain
    of $\zeta(A_j, \cdot)$ for $j \notin S$, and that $\frac{n_t}{m_t}=c$ for all $t\in S$, 
    using Proposition \ref{pipi:asin} we have
    \begin{align*}
      \pt{\prod_{j\notin S}\, \sum_{{^j} \mu_{k_j} <\tau} \frac{1}{({^j}\mu_{k_j})^c} } &\cdot 
      \pt{\prod_{t \in S}\, \sum_{^t \mu_{k_t} <\tau} \frac{1}{({^t}\mu_{k_t})^c}}
      \\
      &\hspace{-1cm}=\pt{\prod_{j \not \in S} 
      \zeta(A_j, c)}\cdot \mathcal{O}\pt{\pt{\log \tau}^s}= \mathcal{O}\pt{ \pt{\log \tau}^{s}},
    \end{align*}
    which implies our claim in this case, in view of \eqref{eq:ineq2nd}.\\
       \item[\underline{Case $p>c$.}]
    To simplify notation, we can suppose, without loss of generality, $p=\frac{n_1}{m_1}$.
    Recalling the assumption ${^j} \mu_{k_j}>1$, $j=1,\ldots,r$, we observe that
    \[
      {^1} \mu_{k_1}\cdot \ldots \cdot {^r}\mu_{k_r}<\tau\Leftrightarrow 
     \left[
     \prod_{j=2}^r ({^j} \mu_{k_j})<\tau
     \;\wedge\; 
     1<{^1}\mu_{k_1}<\frac{\tau}{\prod_{j=2}^r ({^j} \mu_{k_j})}
     \right].
    \]
    In fact, the $\Leftarrow$ implication is immediate, while
    \begin{align*}
    	{^1} \mu_{k_1}\cdot \ldots \cdot {^r}\mu_{k_r}<\tau&\;\wedge\;{^1} \mu_{k_1}>1
	\\
	&\Rightarrow
	1<{^1} \mu_{k_1}<\frac{\tau}{\prod_{j=2}^r ({^j} \mu_{k_j})}
	\\
	&\Rightarrow 
	1<{^1} \mu_{k_1}<\frac{\tau}{\prod_{j=2}^r ({^j} \mu_{k_j})}
	\;\wedge\;
	\prod_{j=2}^r ({^j} \mu_{k_j})<\tau.
    \end{align*}
    Then, we can write
     \begin{align*}
       F_A(\tau,c)&=F_3(\tau)
       \\
       &=\sum_{{^2} \mu_{k_2}\cdot \ldots \cdot {^r}\mu_{k_r}<\tau}
       \frac{1}{({^2} \mu_{k_2})^c \cdot \ldots \cdot ({^r} \mu_{k_r})^c}
       \sum_{1<{^1}\mu_{k_1}<\frac{\tau}{{^2} \mu_{k_2}\cdot \ldots \cdot {^r}\mu_{k_r}}}
       \frac{1}{({^1} \mu_{k_1})^c}
       \\
       &=\sum_{\prod_{j =2}^{r}  ({^j} \mu_{k_j})<\tau} 
       \frac{1}{\prod_{j=2}^r  ({^j} \mu_{k_j})^c }      
       \int_1^{\frac{\tau}{\prod_{j=2}^{r}  ({^j} \mu_{k_j}) }} \frac{1}{\mu^c} 
       dN_{A_1}(\mu).
     \end{align*}
    Switching to $B=(A_1)^\frac{1}{m_1}$, recalling that then $\sigma(B)=\{({^1}\mu_{k_1})^\frac{1}{m_1}\}$ 
    and\footnote{That is,
    \begin{align*}
	 \sum_{1<{^1}\mu_{k_1}<\frac{\tau}{{^2} \mu_{k_2}\cdot \ldots \cdot {^r}\mu_{k_r}}}
       =\frac{1}{({^1} \mu_{k_1})^c}
       &=
 	 \sum_{1<({^1}\mu_{k_1})^\frac{1}{m_1}<
	 \left[\frac{\tau}{{^2} \mu_{k_2}\cdot \ldots \cdot {^r}\mu_{k_r}}\right]^\frac{1}{m_1}}
       =\frac{1}{\left[({^1} \mu_{k_1})^\frac{1}{m_1}\right]^{c\,m_1}}      
     \end{align*}
     similarly to the proof of Proposition \ref{pipi:asin}.} 
    \begin{align*}
    	\int_1^{\frac{\tau}{\prod_{j=2}^{r}  ({^j} \mu_{k_j}) }} \frac{1}{\mu^c} 
       dN_{A_1}(\mu)
       &=
       \int_1^{\left[\frac{\tau}{\prod_{j=2}^{r}  ({^j} \mu_{k_j}) }\right]^\frac{1}{m_1}} \frac{1}{\mu^{c\,m_1}} 
       dN_{B}(\mu),
      \end{align*}
    using \eqref{eq:grigis} with $n_1$ in place of $n$, it turns out that, for $\tau\to+\infty$,
    \begin{align*}
      \nonumber
      F_3(\tau)&=\sum_{\prod_{j=2}^r   ({^j} \mu_{k_j})<\tau} 
        \frac{1}{\prod_{j=2}^r  ({^j} \mu_{k_j})^c}      
        \;\mathcal{O}
        \!\pt{ \pt{\frac{\tau}{ \prod_{j=2}^r ({^j} \mu_{k_j})}      }^{p-c}}
        \\
      &=\left[\sum_{{^2} \mu_{k_2}\cdot\ldots\cdot{^r}\mu_{k_r}<\tau } 
        \frac{1}{({^2} \mu_{k_2})^p\cdot\ldots\cdot({^r}\mu_{k_r})^p}\right] 
        \mathcal{O}\pt{\tau^{p-c} }.
    \end{align*}
     Using the result of the case $p=c$ above, with $s-1$ in place of $s$, we conclude
    \[
     F_3(\tau)= \mathcal{O}\pt{\tau^{p-c}\pt{\log \tau}^{s-1}},
    \]
        as claimed.\\
  \item[\underline{Case $p<c$.}]
    Since $c> \frac{n_j}{m_j}$ for all $j=1, \ldots, r$,
     $c$ belongs to the holomorphic domain of all the functions $\zeta(A_j,z)$, $j=1,\dots,r$.
 Then, by \eqref{eq:zetaseries}, in this case we have, for all $\tau$,
    \begin{align*}
      \prod_{j=1}^r &\zeta\pt{A_j, c}-F_A(\tau,c)=\prod_{j=1}^r \zeta\pt{A_j, c}-F_1(\tau)
      \\
      &=\sum \frac{1}{({^1}\mu_{k_1})^c \cdot \ldots \cdot ({^r}\mu_{k_r})^c}
      - 
      \sum_{{^1} \mu_{k_1} \,\cdot \, \ldots \, \cdot\, {^r} \mu_{k_r}<\tau} 
	\frac{1}{({^1}\mu_{k_1})^c \cdot \ldots \cdot ({^r}\mu_{k_r})^c}
	\\
	&=
	    \sum_{{^1} \mu_{k_1} \,\cdot \, \ldots \, \cdot\, {^r} \mu_{k_r}\ge\tau}
	    \frac{1}{({^1}\mu_{k_1})^c \cdot \ldots \cdot ({^r}\mu_{k_r})^c}.
     \end{align*}
    We will prove the claim by induction on the number of operators. 
    The case $r=2$ is proven in Proposition \ref{pipi:asin}. Let us then suppose that the desired
    estimate holds true for a tensor product of $r-1$ operators, $r\ge 2$,
    and let us prove that it holds true also for a tensor product of $r$ operators.
    
    We can again suppose, without loss of generality, $p=\frac{n_1}{m_1}$.
    Since, clearly,
    \begin{align*}
    	{^1} \mu_{k_1} &\,\cdot \, \ldots \, \cdot\, {^r} \mu_{k_r}\ge\tau
	\Rightarrow
	\\
	&{^1} \mu_{k_1} \ge\frac{\tau}{  \prod_{j=2}^r  ({^j} \mu_{k_j}) }
	\wedge
	\left[ \prod_{j=2}^r  ({^j} \mu_{k_j}) < \tau \;\vee\; \prod_{j=2}^r  ({^j} \mu_{k_j}) \ge\tau\right],
   \end{align*}
    we can write
    \begin{align}
     \nonumber
     \prod_{j=1}^r&\zeta(A_j,c)-F_1\pt{\tau}=
     \\ 
     \label{eq:part1}
     &\phantom{+}\sum_{\prod_{j=2}^r  ({^j} \mu_{k_j})<\tau}
                  \frac{1}{\prod_{j=2}^r  ({^j} \mu_{k_j})^c}
                 \int_{\frac{\tau}{\prod_{j=2}^r  ({^j} \mu_{k_j}) }}^{+\infty}
                 \frac{1}{\mu^c}dN_{A_1}\pt{\mu} \\
                  \label{eq:part2}
                  &+ \sum_{\prod_{j=2}^r  ({^j} \mu_{k_j})\geq \tau}
                  \frac{1}{\prod_{j=2}^r  ({^j} \mu_{k_j})^c}
                 \int_{\frac{\tau}{\prod_{j=2}^r  ({^j} \mu_{k_j}) }}^{+\infty}
                 \frac{1}{\mu^c}dN_{A_1}\pt{\mu}.                  
    \end{align}
    Let us first consider \eqref{eq:part1}. Arguing as in the previous case $p>c$, and using the case $p=c$
    with $s-1$ in place of $s$, we find, for $\tau\to+\infty$,
      \begin{align*}
                  \sum_{\prod_{j=2}^r  ({^j} \mu_{k_j})<\tau}
                  &\frac{1}{\prod_{j=2}^r  ({^j} \mu_{k_j})^c}
                 \int_{\frac{\tau}{\prod_{j=2}^r  ({^j} \mu_{k_j}) }}^{+\infty}
                 \frac{1}{\mu^c}dN_{A_1}\pt{\mu}
                  \\
                  =\; &\sum_{\prod_{j=2}^r  ({^j} \mu_{k_j})<\tau}
                  \frac{1}{\prod_{j=2}^r  ({^j} \mu_{k_j})^c}
                 \;\mathcal{O}\!\pt{\pt{\frac{\tau}{\prod_{j=2}^{r} \phantom{a}^j \mu_{k_j}} }^{p-c} }\\
                 = \; & \mathcal{O}\pt{\tau^{p-c}\pt{\log\tau}^{s-1}},
    \end{align*}
    which is the desired estimate. We now show that \eqref{eq:part2} fulfills the same estimate. 
    Using the fact that $\zeta(A_1,c)$ is finite, we can estimate \eqref{eq:part2} as
    \begin{align}
	\nonumber
	&\sum_{\prod_{j=2}^r  ({^j} \mu_{k_j})\ge\tau}
                  \frac{1}{\prod_{j=2}^r  ({^j} \mu_{k_j})^c}
                 \int_{\frac{\tau}{\prod_{j=2}^r  ({^j} \mu_{k_j}) }}^{+\infty}
                 \frac{1}{\mu^c}dN_{A_1}\pt{\mu}\\
        \label{eq:eqindu}
        \le& \sum_{\prod_{j=2}^r  ({^j} \mu_{k_j})\ge\tau}
                  \frac{1}{\prod_{j=2}^r  ({^j} \mu_{k_j})^c}
                  \zeta(A_1,c).          
    \end{align}
    By the inductive hypothesis, we see that \eqref{eq:eqindu} is
    $\mathcal{O}\pt{\tau^{\widetilde{p}-c} \pt{\log\tau}^{\widetilde{s}-1}}$ for $\tau\to+\infty$,
    where $\widetilde{p}=\max\ptg{\frac{n_j}{m_j}}_{j=2}^r\leq p$ and
    $\widetilde{s}<s$. Therefore, it is also
     $\mathcal{O} \pt{\tau^{p-c} \pt{\log\tau}^{s-1}}$ for $\tau\to+\infty$,
     and the same of course holds for \eqref{eq:part2}, in view of the above estimate.
 \end{itemize}
The proof is complete.
\end{proof}
\begin{ass}\label{assu3rd}
Let $A, A_1,\ldots,A_r$ be as in Assumptions \ref{assu2nd}, and suppose that
there exists $l\in \ptg{1, \ldots, r}$ such that 
\[
	\frac{n_l}{m_l}>\max\ptg{\frac{n_j}{m_j}}_{j\in \{1, \ldots, r\}\setminus \{l\}}.
\]
\end{ass}
For notational simplicity, in the next two statements we also assume, without loss of generality, that $l=1$.
As in the previous section, we first consider the case when $m_1=1$.
We will denote by $\mu_\jl$ the product $\prod_{j=2}^r {^j}\mu_{k_j}$,
where $\jl$ denotes the multi-index $(k_2, \ldots,k_r )\in\NN^{r-1}$.
The following proposition is an extension of Proposition \ref{pipi:iunounon}.
\begin{pipi}\label{pipi:iunounon}
Let $A,A_1, \ldots, A_r$ be as in Assumptions \ref{assu3rd}. Then,
\begin{align*}
  N_{A} (\tau) = \sum_{\mu_{\jl}< \tau } 
  \pt{ \frac{C_1}{n_1} \pt{ \frac{\tau}{\mu_{\jl}}}^{n_1} + 
   \frac{1} { \mu_{\jl}^ {n_1-1} }  r_{\jl} \pt{\tau}},
\end{align*}
where $C_1$ is given by \eqref{eq:C1} and $r_{\jl}$ is $\mathcal{O}\pt{\tau^{n_1-1}}$,
 uniformly  with respect to $\mu_{\jl}$, for any $\jl$. That is, there exists a positive constant $C$ such that
 \[
  |r_{\jl}\pt{\tau}|\le C \tau^{n_1-1}, \quad \text{for all } \jl \in \NN^{r-1}.
 \]
\end{pipi}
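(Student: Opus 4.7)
The plan is to follow the proof of Proposition \ref{pipi:iunouno} essentially verbatim, substituting the scalar $\mu_k$ with the product $\mu_\jl = \prod_{j=2}^r {^j}\mu_{k_j}$ of eigenvalues from the last $r-1$ factors. The argument breaks into three short steps.

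First, I would group the eigenvalues of $A$ according to the multi-index $\jl\in\NN^{r-1}$. Since every ${^j}\mu_{k_j}>1$, and in particular $\mu_\jl>1$, the analogue of \eqref{eq:sommu} reads
$$N_A(\tau) = \sum_{{^1}\mu_{k_1}\cdot\mu_\jl<\tau} 1 = \sum_{\mu_\jl<\tau} N_{A_1}\!\pt{\frac{\tau}{\mu_\jl}},$$
where the outer restriction $\mu_\jl<\tau$ is harmless because ${^1}\mu_{k_1}>1$ forces the inner sum to be empty otherwise.

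Second, using $m_1=1$, I apply the sharp Weyl law \eqref{eq:weylcpt} to each term and obtain
$$N_A(\tau) = \sum_{\mu_\jl<\tau} \pt{ \frac{C_1}{n_1}\pt{\frac{\tau}{\mu_\jl}}^{n_1} + R_{A_1}\!\pt{\frac{\tau}{\mu_\jl}} },$$
with $C_1$ as in \eqref{eq:C1}. By \eqref{eq:restocpt} there exists a constant $\kappa>0$, depending only on $A_1$, such that $|R_{A_1}(t)|\le \kappa\, t^{n_1-1}$ for all $t>1$. Since $\mu_\jl<\tau$ implies $\tau/\mu_\jl>1$, this yields
$$\abs{R_{A_1}\!\pt{\frac{\tau}{\mu_\jl}}} \le \kappa\pt{\frac{\tau}{\mu_\jl}}^{n_1-1}.$$
Setting $r_\jl(\tau) = \mu_\jl^{n_1-1}\, R_{A_1}(\tau/\mu_\jl)$ produces the claimed decomposition together with the uniform bound $|r_\jl(\tau)|\le \kappa\,\tau^{n_1-1}$ for every $\jl\in\NN^{r-1}$.

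The only point meriting attention, rather than a genuine obstacle, is the uniformity in $\jl$: this is immediate because the remainder constant $\kappa$ in the sharp Weyl law for $A_1$ is intrinsic to $A_1$ and does not depend on the rescaling factor $\mu_\jl$ appearing in the argument. Hence the proof is a direct multi-index generalization of the $r=2$ case, with no new analytic input required beyond the sharp Weyl law already stated.
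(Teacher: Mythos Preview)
Your proposal is correct and is exactly the argument the paper has in mind: the paper does not even write out a separate proof of this proposition, presenting it simply as the direct multi-index extension of Proposition~\ref{pipi:iunouno}, which is precisely what you carry out. The three steps you outline---grouping by $\jl$, applying the sharp Weyl law \eqref{eq:weylcpt} for $A_1$ at $\tau/\mu_{\jl}$, and observing that the remainder constant $\kappa$ from \eqref{eq:restocpt} is independent of the rescaling---reproduce the proof of Proposition~\ref{pipi:iunouno} verbatim with $\mu_k$ replaced by $\mu_{\jl}$.
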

Proposition \ref{pipi:iunounon} implies the next lemma, which is a multidimensional version of Lemma
\ref{lem:iuno}. We omit the proof, since the argument is analogue to the one used to prove Lemma 
\ref{lem:iuno}, similarly to what has been done in the proof of Proposition \ref{pipi:asinndim}.
\begin{lem}\label{lem:iunon}
Let $A,A_1, \ldots, A_r$ be as in Assumptions \ref{assu3rd}. Let us suppose that $m_1=1$
and $n_1> \frac{n_j}{m_j}$, $j=2, \ldots, r$, and set
\[
 p=\max\ptg{\frac{n_j}{m_j}}_{j=2,\ldots,r}, \quad S=\ptg{j=2,\ldots, r \colon \frac{n_j}{m_j}=p}, \quad s= \sharp S.
 \]
 Then we have, for $\tau \rightarrow +\infty$,
\begin{equation*}
N_{A}(\tau) = \left\{
\begin{array}{lcl}\displaystyle
C_A\,
\tau^{n_1} + \mathcal{O}\left(\tau^{n_1-1}\right) 
& \mbox{if} & p< n_1-1, \\
\displaystyle
C_A\,
                     \tau^{n_1} + \mathcal{O}\pt{\tau^{n_1-1} \pt{\log\tau}^{s} 
                     }
& \mbox{if} & p=n_1-1, \rule{0mm}{7mm} \\
\displaystyle
C_A\,
                     \tau^{n_1} + \mathcal{O}\pt{\tau^{ p} \pt{\log\tau}^{s-1}}
& \mbox{if}  & p > n_1-1, \rule{0mm}{7mm}
\end{array} \right. 
\end{equation*}
where 
\[
	C_A=\dfrac{C_1}{n_1} \prod_{j=2}^r \zeta\left(A_j, n_1\right)
\]
and $C_1$ is given by \eqref{eq:C1}.
\end{lem}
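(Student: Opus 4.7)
The plan is to imitate the proof of Lemma~\ref{lem:iuno}, with Proposition~\ref{pipi:iunounon} replacing Proposition~\ref{pipi:iunouno} and Proposition~\ref{pipi:asinndim} replacing Proposition~\ref{pipi:asin}. Write $\widetilde{A}=A_2\otimes\cdots\otimes A_r$ and, in accordance with Proposition~\ref{pipi:asinndim}, set
\[
F_{\widetilde A}(\tau,c)=\sum_{\mu_\jl<\tau}\frac{1}{\mu_\jl^{c}},
\]
so that $\widetilde A$ has threshold $p=\max_{j=2,\dots,r}n_j/m_j$ and multiplicity $s=\sharp S$ exactly as in the hypothesis.

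First, I would use Proposition~\ref{pipi:iunounon} together with the uniform bound $|r_\jl(\tau)|\le C\tau^{n_1-1}$ to decompose, in analogy with \eqref{eq:mainest},
\begin{align*}
\left|N_A(\tau)-C_A\tau^{n_1}\right|
&\le\frac{C_1}{n_1}\,\tau^{n_1}\left|\prod_{j=2}^r\zeta(A_j,n_1)-F_{\widetilde A}(\tau,n_1)\right|
\\
&\quad+C\,\tau^{n_1-1}\,F_{\widetilde A}(\tau,n_1-1).
\end{align*}

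Second, I would invoke Proposition~\ref{pipi:asinndim} twice. For the first summand, the hypothesis $n_1>n_j/m_j$ for all $j\ge 2$ gives $n_1>p$, so the case $c>p$ of that proposition (with $c=n_1$) yields
\[
\prod_{j=2}^r\zeta(A_j,n_1)-F_{\widetilde A}(\tau,n_1)=\mathcal{O}\!\pt{\tau^{p-n_1}(\log\tau)^{s-1}},
\]
whence the first summand above is $\mathcal{O}\pt{\tau^{p}(\log\tau)^{s-1}}$. For the second summand I would apply Proposition~\ref{pipi:asinndim} with $c=n_1-1$, and the bound splits into three sub-cases according to the sign of $n_1-1-p$: it gives $F_{\widetilde A}(\tau,n_1-1)=\mathcal{O}(1)$ if $p<n_1-1$, $\mathcal{O}((\log\tau)^{s})$ if $p=n_1-1$, and $\mathcal{O}\pt{\tau^{p-n_1+1}(\log\tau)^{s-1}}$ if $p>n_1-1$.

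Third, I would assemble the two summands in each of the three regimes. Multiplying the second estimate by $\tau^{n_1-1}$ produces $\mathcal{O}\pt{\tau^{n_1-1}}$, $\mathcal{O}\pt{\tau^{n_1-1}(\log\tau)^{s}}$, and $\mathcal{O}\pt{\tau^{p}(\log\tau)^{s-1}}$, respectively. Comparing these with the universal bound $\mathcal{O}\pt{\tau^{p}(\log\tau)^{s-1}}$ coming from the first summand, and keeping the dominant contribution in each case, one reads off exactly the three estimates claimed in the statement. The main obstacle is purely bookkeeping: one must track carefully how the logarithmic exponents $s$ and $s-1$ from Proposition~\ref{pipi:asinndim} combine in each regime, and verify that no spurious $\log\tau$ factor appears when the two contributions are merged. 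Beyond this, no idea beyond those already used in the two-factor proof of Lemma~\ref{lem:iuno} is required.
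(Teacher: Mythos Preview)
Your proposal is correct and follows precisely the route the paper indicates: it mirrors the proof of Lemma~\ref{lem:iuno}, replacing Proposition~\ref{pipi:iunouno} by Proposition~\ref{pipi:iunounon} and Proposition~\ref{pipi:asin} by Proposition~\ref{pipi:asinndim}, and the case-by-case bookkeeping you outline matches exactly what is needed. The only point worth making explicit is that in the regime $p<n_1-1$ the first summand $\mathcal{O}\pt{\tau^{p}(\log\tau)^{s-1}}$ is absorbed into $\mathcal{O}\pt{\tau^{n_1-1}}$ since $p<n_1-1$, which you implicitly use.
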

Finally, using powers of the operator $A$, 
it is possible to extend the result to the case where all the factors have arbitrary positive order,
which is, together with Theorem \ref{thm:teoprinndimSh} below for the tensor product of
$r$ factors in the Shubin calculus, our next main result.
\begin{theo}\label{thm:teoprinndim}
Let $M_1, \ldots, M_r$ be closed manifolds of dimension $n_1, \ldots, n_r$, respectively.
Let $A= A_1 \otimes \cdots\otimes A_r$, where $A_j \in \Lcl {m_j}{M_j}$,  $m_j>0$, $j=1, \ldots, r$, are
positive, self-adjoint, invertible operators, and assume that there exists 
$l\in \ptg{1, \ldots, r}$ such that $\frac{n_l}{m_l}> \max\ptg{\frac{n_j}{m_j}}_{j\in 
\ptg{1, \ldots, r}\setminus \ptg{l}}$. Set
\[
 p =\max\ptg{ \frac{n_j}{m_j}}_{j\in \{1, \ldots, r \}\setminus \{l\}},\;
 S=\ptg{j=1,\ldots,r, j\not=l\colon \frac{n_j}{m_j}=p}, \; s=\sharp S.
\]
Then, for $\tau \rightarrow + \infty$, 
\begin{equation*}
N_{A}(\tau) = \left\{
\begin{array}{lcl}\displaystyle
C_A\,
\tau^{\frac{n_l}{m_l}} + \mathcal{O}\left(\tau^{\frac{n_l-1}{m_l}}\right)
& \mbox{if} & p<\dfrac{n_l-1}{m_l}, \\
\displaystyle
C_A\,
                     \tau^{\frac{n_l}{m_l}} + \mathcal{O}\pt{\tau^\frac{n_l-1}{m_l} \,\pt{\log\tau}^s  }
                     & \mbox{if} & p=\dfrac{n_l-1}{m_l}, \rule{0mm}{7mm} \\
\displaystyle
C_A\,
                     \tau^{\frac{n_l}{m_l}} + \mathcal{O}\pt{\tau^{
                     p }\pt{\log\tau}^{s-1}  }
                     & \mbox{if}  & p > \dfrac{n_l-1}{m_l}, \rule{0mm}{7mm}
\end{array} \right. 
\end{equation*}
where 
\[
 C_A=\dfrac{C_l}{n_l} \prod_{\substack{j=1,\ldots,r \\ j\not= l}} \zeta\left(A_j, \frac{n_l}{m _l}\right),
 \quad
 C_l= \frac{1}{\pt{2\pi}^{n_l}} \int_{M_l}\int_{\mathbb{S}^{n_l-1}}
 \frac{d\theta_l dx_l}{[a_{m_l}\pt{x_l, \theta_l}]^{\frac{n_l}{m_l}}} . 
\]
\end{theo}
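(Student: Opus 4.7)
My plan is to reduce the statement to the special case already handled in Lemma \ref{lem:iunon}, using the same power-normalization idea invoked in the proof of Theorem \ref{thm:teoprin}. After a harmless relabeling of the factors I would assume $l=1$, and then pass to the spectral fractional power $\widetilde A = A^{1/m_1} = \widetilde A_1 \otimes \cdots \otimes \widetilde A_r$, where $\widetilde A_j := A_j^{1/m_1}$. This operator retains the tensor product structure, its dominant factor $\widetilde A_1$ has order $1$, and each remaining factor $\widetilde A_j$ has order $m_j/m_1$, so the hypotheses of Lemma \ref{lem:iunon} are satisfied for $\widetilde A$.

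Next I would record three elementary scaling identities relating $A$ and $\widetilde A$: the counting functions satisfy $N_A(\tau) = N_{\widetilde A}(\tau^{1/m_1})$; the spectral $\zeta$-functions transform via $\zeta(\widetilde A_j, s) = \zeta(A_j, s/m_1)$; and the ratios $\widetilde n_j/\widetilde m_j$ for $j\ne 1$ equal $n_j m_1/m_j$, so the auxiliary quantity $\widetilde p$ attached to $\widetilde A$ equals $m_1 p$ and the maximizing set $\widetilde S$ coincides with $S$. In particular, the trichotomy in Lemma \ref{lem:iunon} ($\widetilde p$ versus $n_1-1$) matches exactly the trichotomy in the theorem ($p$ versus $(n_l-1)/m_l$).

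Applying Lemma \ref{lem:iunon} to $\widetilde A$ and then substituting $\tau \mapsto \tau^{1/m_1}$ would yield the three asymptotic expansions in turn. The leading power $\tau^{n_1}$ becomes $\tau^{n_l/m_l}$, while the remainders $\tau^{n_1-1}$, $\tau^{n_1-1}(\log\tau)^s$, and $\tau^{\widetilde p}(\log\tau)^{s-1}$ transform respectively into $\tau^{(n_l-1)/m_l}$, $\tau^{(n_l-1)/m_l}(\log\tau)^s$, and $\tau^{p}(\log\tau)^{s-1}$, where the factor $1/m_l$ coming from $\log\tau^{1/m_l} = (1/m_l)\log\tau$ is harmlessly absorbed into the implicit $\mathcal{O}$-constants.

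The only nontrivial bookkeeping, which I regard as the main verification step rather than a genuine obstacle, is to identify the coefficient of the leading term with the claimed $C_A = (C_l/n_l)\prod_{j\ne l}\zeta(A_j, n_l/m_l)$. The identity $\zeta(\widetilde A_j, n_1) = \zeta(A_j, n_l/m_l)$ handles the $\zeta$-product immediately, while a change of variable in the integral defining the constant $\widetilde C_1$ associated with $\widetilde A_1$ (whose principal symbol is $a_{m_l}^{1/m_l}$) produces exactly the factor $C_l$ appearing in the theorem. No further analytic input is required beyond Lemma \ref{lem:iunon}.
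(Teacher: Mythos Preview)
Your proposal is correct and follows essentially the same approach as the paper: the paper's proof consists of the single remark that, ``using powers of the operator $A$, it is possible to extend the result to the case where all the factors have arbitrary positive order,'' reducing Theorem~\ref{thm:teoprinndim} to Lemma~\ref{lem:iunon} exactly as you describe. Your write-up is in fact more detailed than the paper's, spelling out the scaling identities and the bookkeeping for the leading constant; the only omission is the harmless normalization ensuring all eigenvalues exceed~$1$ (so that Assumptions~\ref{assu2nd} hold for $\widetilde A$), which is handled by a constant rescaling as in the $r=2$ case.
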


\begin{theo}\label{thm:teoprinndimSh}
Let $P=P_1\otimes \cdots\otimes P_r$ and $P_j\in \Gcl{m_j}{\RR^{n_j}}$, $m_j>0$,
$j=1,\ldots,r$, be positive, self-adjoint, invertible operators, and assume that 
there exists $l\in \ptg{1, \ldots, r}$ such that $\frac{2n_l}{m_l}> 
\max\ptg{\frac{2n_j}{m_j}}_{j\in 
\{1, \ldots, r\}\setminus \{l\}}$. Set
\[
 p =\max\ptg{ \frac{2n_j}{m_j}}_{j\in \{1, \ldots, r \}\setminus \{l\}},\;
 S=\ptg{j=1,\ldots,r, j\not=l\colon \frac{2n_j}{m_j}=p}, \; s=\sharp S.
\]
Then, for $\tau \rightarrow + \infty$, 
\begin{equation*}
N_{P}(\tau) = \left\{
\begin{array}{lcl}\displaystyle
K_P\,
\tau^{\frac{2n_l}{m_l}} + \mathcal{O}\left(\tau^{\frac{2n_l-1}{m_l}}\right)
& \mbox{if} & p<\dfrac{2n_l-1}{m_l}, \\
\displaystyle
K_P\,
                     \tau^{\frac{2n_l}{m_l}} + \mathcal{O}\pt{\tau^\frac{2n_l-1}{m_l} \,\pt{\log\tau}^s  }
                     & \mbox{if} & p=\dfrac{2n_l-1}{m_l}, \rule{0mm}{7mm} \\
\displaystyle
K_P\,
                     \tau^{\frac{2n_l}{m_l}} + \mathcal{O}\pt{\tau^{
                     p }\pt{\log\tau}^{s-1}  }
                     & \mbox{if}  & p > \dfrac{2n_l-1}{m_l}, \rule{0mm}{7mm}
\end{array} \right. 
\end{equation*}
where 
\[
K_P= \dfrac{V_l}{2n_l} \prod_{\substack{j=1,\ldots,r \\ j\not= l}} \zeta\left(P_j, \frac{2n_l}{m _l}\right),\quad
V_l=\frac{1}{\pt{2 \pi}^{2n_l}} \int_{\mathbb{S}^{2n_l-1}}\frac{d\theta_l}{[p_{m_l}(\theta_l)]^{\frac{2n_l}{m_l}} }.
\]
\end{theo}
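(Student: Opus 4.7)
The plan is to mirror the proof of Theorem \ref{thm:teoprinndim}, with the systematic substitution of each Hörmander-type exponent $n_j/m_j$ by its Shubin counterpart $2n_j/m_j$, and likewise $(n_l-1)/m_l$ by $(2n_l-1)/m_l$. This substitution reflects the fact that the sharp Weyl law \eqref{eq:weylshubin} for a Shubin operator of order $m$ on $\RR^n$ has leading exponent $2n/m$ and remainder of order $(2n-1)/m$, in contrast to the closed-manifold case.

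First I would establish the Shubin analog of Proposition \ref{pipi:asinndim}: for $P=P_1\otimes\cdots\otimes P_r$ as in the statement, setting $p=\max_j 2n_j/m_j$ and $s=\sharp\{j\colon 2n_j/m_j=p\}$, one obtains, for $\tau\to+\infty$,
\[
\prod_{j=1}^r \zeta(P_j,c)-F_1(\tau)=\mathcal{O}\!\pt{\tau^{p-c}(\log\tau)^{s-1}},\
F_2(\tau)=\mathcal{O}\!\pt{(\log\tau)^s},\
F_3(\tau)=\mathcal{O}\!\pt{\tau^{p-c}(\log\tau)^{s-1}}.
\]
The argument parallels the proof of Proposition \ref{pipi:asinndim} verbatim: one splits the product $F_A(\tau,c)\le\prod_j \sum_{\mu_{k_j}<\tau}({^j}\mu_{k_j})^{-c}$ according to $S$, uses Proposition \ref{pipi:asinSh} on each factor, and in the case $p<c$ performs induction on $r$ after splitting the summation into $\prod_{j\ge 2}{^j}\mu_{k_j}<\tau$ versus $\ge\tau$. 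The only ingredient that must be replaced is the uniform spectral counting estimate: here one uses the Shubin-type analog $N_B(\lambda+1)-N_B(\lambda)=\mathcal{O}(\lambda^{2n-1})$ for a positive self-adjoint Shubin operator of order $1$, which follows from \eqref{eq:weylshubin}.

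Next I would reduce to the case $m_l=1$ and $l=1$ by considering an appropriate power of $P$ at the level of spectra (since we only need the counting function), and assume $\mu_{k_j}>1$ for all eigenvalues, so that the rescaling trick \eqref{eq:prodspectrum} applies. Setting $\mu_{\jl}=\prod_{j=2}^r {^j}\mu_{k_j}$, one has the fundamental identity
\[
N_P(\tau)=\sum_{\mu_{\jl}<\tau} N_{P_1}\!\pt{\frac{\tau}{\mu_{\jl}}}.
\]
Applying \eqref{eq:weylshubin} to $P_1$ and using the sharp remainder bound $|R_{P_1}(t)|\le\kappa t^{2n_1-1}$ for $t>1$, one obtains, exactly as in Proposition \ref{pipi:iunounon},
\[
N_P(\tau)=\sum_{\mu_{\jl}<\tau}\pt{\frac{V_1}{2n_1}\pt{\frac{\tau}{\mu_{\jl}}}^{2n_1}+\frac{r_{\jl}(\tau)}{\mu_{\jl}^{2n_1-1}}},
\]
with $r_{\jl}$ uniformly $\mathcal{O}(\tau^{2n_1-1})$ in $\jl$. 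Subtracting $\frac{V_1}{2n_1}\prod_{j=2}^r\zeta(P_j,2n_1)\tau^{2n_1}$ and applying the triangle inequality yields the analog of the key estimate \eqref{eq:mainest}:
\[
\Bigl|N_P(\tau)-K_P\tau^{2n_1}\Bigr|\le\frac{V_1}{2n_1}\tau^{2n_1}\Bigl|\prod_{j=2}^r\zeta(P_j,2n_1)-F_Q(\tau,2n_1)\Bigr|+C\tau^{2n_1-1}F_Q(\tau,2n_1-1),
\]
where $Q=P_2\otimes\cdots\otimes P_r$ and $F_Q$ is the associated multi-sum. The three cases $p\lessgtr 2n_1-1$ are then distinguished exactly as in Lemma \ref{lem:iunon}, plugging in the estimates of Step 1 and trivially verifying that each summand matches the claimed remainder.

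The main obstacle I foresee is purely combinatorial: tracking the correct logarithmic powers $(\log\tau)^s$ and $(\log\tau)^{s-1}$ in each branch of the induction on $r$, particularly in the splitting of the tail sum $F_1$ for $p<c$. The rest is just careful substitution. Finally, the reduction to $m_l=1$ is undone by replacing $\tau$ with $\tau^{1/m_l}$ in the resulting estimate, transforming exponents $2n_1$, $2n_1-1$, $p$ into $2n_l/m_l$, $(2n_l-1)/m_l$, $p/m_l$ respectively (equivalently, applying the formulation of Step 1 with the original values of $2n_j/m_j$), and identifying the constant as
\[
K_P=\dfrac{V_l}{2n_l}\prod_{\substack{j=1,\dots,r \\ j\ne l}}\zeta\!\pt{P_j,\frac{2n_l}{m_l}},
\]
as claimed.
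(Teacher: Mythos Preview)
Your proposal is correct and follows precisely the route the paper indicates: the paper omits the proof of Theorem~\ref{thm:teoprinndimSh} entirely, stating at the start of Section~\ref{sec:prodn} that it ``can be obtained by similar arguments'' to the H\"ormander case, using Proposition~\ref{pipi:asinSh} in place of Proposition~\ref{pipi:asin} and the Shubin Weyl law~\eqref{eq:weylshubin} in place of~\eqref{eq:weylcpt}. Your write-up spells out exactly this parallel argument, including the Shubin analog of Proposition~\ref{pipi:asinndim}, the reduction to $m_l=1$, and the key estimate in the style of~\eqref{eq:mainest}, so there is nothing to add.
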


\section{Sharpness of the result}\label{sec:contro}
  In this section we show that the estimates obtained in Theorem \ref{thm:teoprin} are sharp.
  To begin, we choose two pseudodifferential operators on spheres,  whose spectrum we can describe
  explicitly. Namely, we set
  \[
    A_1= \pt{-\Delta_{\mathbb{S}^2}+2} - 2 \pt{ -\Delta_{\mathbb{S}^2}+\frac{1}{4}}^{\frac{1}{2}}\in\Lcl{2} {\mathbb{S}^2},
    \;
    A_2= - \Delta_{\mathbb{S}^1}+1\in\Lcl{2}{\mathbb{S}^1},
  \]
  where $A_1$ is considered as an unbounded operator on $L^2(\mathbb{S}^2)$, where
  $\mathbb{S}^2$ is the $2$-dimensional sphere, and
  $A_2$ is considered as an unbounded operator on $L^2(\mathbb{S}^1)$, where
  $\mathbb{S}^1$ is the $1$-dimensional sphere.
  It is well known, see, e.g., \cite[\S 3]{SH87}, that
  \begin{align*}
    \sigma\pt{-\Delta_{\mathbb{S}^2}}&=\ptg{k^2+k \mid k\in \NN, \; \molt{k^2+k}=\pt{2k+1}},\\
    \sigma\pt{-\Delta_{\mathbb{S}^1}}&=\ptg{n^2\mid n\in \NN, \; \molt{n^2}=2},
  \end{align*}
  where $\molt{\tau}$ is the multiplicity of the eigenvalue $\tau$. 
  Therefore, by the functional calculus of operators,
  \begin{align}
   \label{eq:specA1}
    \sigma\pt{A_1}&=\ptg{k^2-k+1 \mid k\in \NN, \; \molt{k^2-k+1}=\pt{2k+1}},\\
    \label{eq:specA2}
    \sigma\pt{A_2}&=\ptg{n^2+1\mid n\in \NN, \; \molt{n^2+1}=2},
  \end{align}
  since the eigenfunction of $A_1$ and $-\Delta_{\mathbb{S}^2}$ are the same.
  Notice that all the eigenvalues of $A_1$ are larger then $1$, therefore
  \begin{align}
   \label{eq:null1}
   N_{A_1}(\tau)=0, \quad \tau\leq 1.
  \end{align}
  Knowing precisely the eigenvalues of $A_1$ together with their multiplicities, we can write, for $\tau>1$,
  \begin{align*}
    N_{A_1}(\tau)=&\sum_{k^2-k+1<\tau} \molt{k^2-k+1}\\
     =&\sum_{k^2-k+1<\tau} \pt{2k+1}=\sum_{k=0}^{\bar{k}} \pt{2k+1}
   \end{align*}
   where
   \begin{align*}
   \bar{k}^2-\bar{k}+1< \tau \leq \pt{\bar{k}+1}^2-\pt{\bar{k}+1}+1=\bar{k}^2+\bar{k}+1, \tau>1.
   \end{align*} 
  That is,
  \begin{multline}
    \label{eq:equivcount}
   	N_{A_1}(\tau)=\sum_{k=0}^{\bar{k}}\pt{2k+1}=
    	\sum_{k^2+k\leq \bar{k}^2+\bar{k}}\molt{k^2+k}= 
    	N_{-\Delta_{\mathbb{S}^2}}\pt{\bar{k}^2+\bar{k}+\frac{1}{2}},
  \end{multline}
  provided that
  \[
	\bar{k}^2-\bar{k}+1< \tau \leq 
	\pt{\bar{k}+1}^2-\pt{\bar{k}+1}+1=\bar{k}^2+\bar{k}+1, \tau>1.
  \]
  Using a well known result on the counting function of the Laplacian on the spheres (see \cite{SH87}), we
  have, for each $\bar{k} \in \NN$,
  \[
    N_{-\Delta_{\mathbb{S}^2}}\pt{\bar{k}^2+\bar{k}+\frac{1}{2}}= \bar{k}^2+2\bar{k}+1.
  \]
  So, in view of \eqref{eq:equivcount}, supposing $\tau>1$, we find
  \begin{multline*}
    N_{A_1}(\tau)= \bar{k}^2+2\bar{k}+1, \\
    \quad  \bar{k}^2-\bar{k}+1< \tau \leq \pt{\bar{k}+1}^2-\pt{\bar{k}+1}+1=\bar{k}^2+\bar{k}+1.
  \end{multline*}
  The asymptotic expansion \eqref{eq:weylcpt} implies that
  \[
    N_{A_1}\pt{\tau}= \tau+ R\pt{\tau}, \quad R= \mathcal{O}\pt{\tau^{\frac{1}{2}}}.
  \]
  We can then obtain a bound for $R(\tau)$:
  \begin{align*}
    R(\tau)&= N_{A_1}\pt{\tau}- \tau\\
              &= \bar{k}^2+2\bar{k}+1- \tau, \quad  \bar{k}^2-\bar{k}+1< \tau \leq \bar{k}^2+\bar{k}+1. 
  \end{align*}
  Therefore, for $\tau>16$,
  \[
    R(\tau)\geq \bar{k}^2+2\bar{k}+1 -\bar{k}^2-\bar{k}-1=\bar{k}> \frac{3\sqrt{\tau}}{4},
  \]
  which implies, in particular, that the remainder is positive for $\tau>16$. 
  We also have
  \[
   R(\tau)< \bar{k}^2+2\bar{k}+1- \bar{k}^2+\bar{k}-1=2\bar{k}< 4 \sqrt{\tau},
  \]
  and we can conclude that
  \begin{align}
    \label{eq:limbasso}
    \frac{3{\sqrt{\tau}}}{4} \leq R(\tau)\leq 4 \sqrt{\tau}, \quad \tau>16.
  \end{align}
  Summing up, we proved that
  \begin{align}
    \label{eq:countA1}
    &N_{A_1}(\tau)=  \tau + R(\tau),\\
    \label{eq:countA2}
    &N_{A_2}(\tau)=   2 \;\tau^{1/2} + \mathcal{O}(1),
  \end{align}
  where the $R(\tau)$ in \eqref{eq:countA1} satisfies \eqref{eq:limbasso}.
  Notice that both $A_1$ and $A_2$ are elliptic, invertible and positive, so it is possible to consider
  powers of these operators of arbitrary exponent. Now, we  examine separately
  the three different situations that can arise.

  \subsection*{Case $\frac{n_1}{m_1}> \frac{n_2}{m_2}$ and $\frac{n_1-1}{m_1}>\frac{n_2}{m_2}$}
    Let us consider the operator
    \[
      B= A_1 \otimes A_2^2.
    \]
    Clearly $\frac{n_1}{m_1}=\frac{2}{2}=1> \frac{n_2}{m_2}=\frac{1}{4}$ and 
    $\frac{n_1-1}{m_1}=\frac{1}{2}> \frac{n_2}{m_2}=\frac{1}{4}$, so we are in the first case of
    Theorem \ref{thm:teoprin}, which states that 
    \begin{align}
	\label{eq:contBth}
	N_{B}(\tau)= \zeta(A_2^2,1)\tau+\mathcal{O}\pt{\tau^{1/2}}.
    \end{align}
    By equations \eqref{eq:specA1} and \eqref{eq:specA2} we obtain
    \begin{multline*}
      \sigma\pt{B}=\large\{
      \pt{k^2-k + 1} \pt{n^2+1}^2\mid k, n \in \NN,\\ \molt{ (k^2-k + 1) (n^2+1)^2}=2 (2 k+1) \large \}.
    \end{multline*}
    Therefore, 
    \begin{align}
      \nonumber
      N_{B}(\tau)&= \sum_{\pt{k^2-k + 1} \pt{n^2+1}^2< \tau}^{n\in \NN,\; k\in \NN} \molt{\pt{k^2-k + 1} \pt{n^2+1}^2}\\
		   \nonumber
		   &= \sum_{\pt{k^2-k + 1} \pt{n^2+1}^2< \tau}^{n\in \NN,\; k\in \NN} 2 \pt{2k+1}\\
		   \nonumber
		   &=2 \sum_{\pt{k^2-k+1}<\frac{\tau}{(n^2+1)^2}}^{n\in \NN,\; k\in \NN}
		   \molt{k^2-k+1}\\
		   \label{eq:rid}
		   &= 2 \sum_{(n^2+1)^2<\tau}^{n\in \NN} N_{A_1}\pt{\frac{\tau}{\pt{n^2+1}^2}}\\
		   \label{eq:lambda}
		   &=2 \pt{\sum_{(n^2+1)^2<\tau }^{n\in \NN} \frac{\tau}{(n^2+1)^2}+ 
		   R\pt{\frac{\tau}{(n^2+1)^2}}}.
    \end{align}
    Notice that in \eqref{eq:rid} we have made use of \eqref{eq:null1}
    to reduce the summation.
    Let us now show that the estimate \eqref{eq:contBth} is indeed sharp, that is 
    \[
      \limsup_{\tau\to +\infty} \frac{\abs{ N_{B}(\tau)-  \zeta\pt{A_2^2,1}\tau} }
      { \tau^{1/2}}>0,
    \]
    by direct computation. In view of \eqref{eq:lambda}, we can write
    \begin{align}
	\nonumber
	&\limsup_{\tau\to +\infty} \frac{\abs{ N_{B}(\tau)-  \zeta\pt{A_2^2,1}\tau} }
         { \tau^{1/2}}\\
        \nonumber
        &=\limsup_{\tau\to +\infty} \frac{\abs{ 2 \sum_{(n^2+1)^2<\tau } \pt{\frac{\tau}{(n^2+1)^2}+ 
		   R\pt{\frac{\tau}{(n^2+1)^2}}}-  \zeta\pt{A_2^2,1}\tau} }
         { \tau^{1/2}}\\
        \label{eq:caso1s}
        &=\limsup_{\tau\to +\infty} 
        \frac{\abs{ 2 \sum_{(n^2+1)^2<\tau } \frac{\tau}{(n^2+1)^2}-  \zeta\pt{A_2^2,1}\tau+ 
		   2 \sum_{(n^2+1)^2<\tau }R\pt{\frac{\tau}{(n^2+1)^2}}} }
         { \tau^{1/2}}. 
    \end{align}
    We notice that
    \begin{multline*}
      \limsup_{\tau \to +\infty} 
      \frac{\abs{2\sum_{(n^2+1)^2<\tau } \frac{\tau}{(n^2+1)^2}-  \zeta\pt{A_2^2,1} \tau} }
      { \tau^{1/2}}
      \\
      =\limsup_{\tau\to +\infty}\; \tau^{1/2}(F_{A_2^2}(\tau,1)-\zeta\pt{A_2^2,1} ),
    \end{multline*}
    where we have used the notation introduced in Section \ref{sec:taub}. 
    By Proposition \ref{pipi:asin}, $F_{A_2^2}(\tau,1)-\zeta\pt{A_2^2,1} =\mathcal{O}\pt{\tau^{-\frac{3}{4}}}$,
    therefore\footnote{Actually, here one could prove directly that $F_1(\tau)-\zeta\pt{A_2^2,1}$ 
    is asymptotic to $\tau^{-\frac{3}{4}}$.},
    \[
     \limsup_{\tau \to +\infty} 
      \frac{\abs{2 \sum_{(n^2+1)^2<\tau } \frac{\tau}{(n^2+1)^2}-  \zeta\pt{A_2^2,1} \tau}}
      { \tau^{1/2}}=0.
    \]
    Since, for all $\tau$,
    \[
      \sum_{(n^2+1)^2<\tau } 2\frac{\tau}{(n^2+1)^2}-  \zeta\pt{A_2^2,1} \tau\leq0,
    \]
     \eqref{eq:caso1s} becomes
    \begin{align*}
      \nonumber
       &\limsup_{\tau\to +\infty} \frac{\abs{ N_{B}(\tau)-  \zeta\pt{A_2^2,1}\tau} }
         { \tau^{1/2}}\\
      &\geq - \limsup_{\tau \to +\infty} \frac{{\zeta\pt{A_2^2,1}-2\sum_{(n^2+1)^2<\tau } 
      \frac{\tau}{(n^2+1)^2}}}
      { \tau^{1/2}}+
      2 \limsup_{\tau\to +\infty}  \sum_{(n^2+1)^2<\tau } \frac{\abs{R\pt{\frac{\tau}
      {(n^2+1)^2}}}}{ \tau^{1/2}}\\
        \nonumber
      &\geq  \frac{3}{2}   \limsup_{\tau\to +\infty} \sum_{(n^2+1)^2<\tau }
      \frac{\tau^{1/2}}{ (n^2+1) \tau^{1/2}}\\
      &= \frac{3}{2}\zeta\pt{A_2^2,\frac{1}{2}}.
    \end{align*}
    Here, we have used  the estimates \eqref{eq:limbasso} and that the quantities
    $\frac{n_1}{m_1}=1$ and $\frac{n_1-1}{n_2}=\frac{1}{2}$ are larger than 
    $\frac{n_2}{m_2}=\frac{1}{4}$. The latter implies that $\zeta\pt{A_2^2, \frac{1}{2}}$ is
    a finite, positive quantity\footnote{The convergence
    of the involved series is straightforward.},
    in view of the holomorphic
    properties of the spectral $\zeta$-function of elliptic positive pseudodifferential operators on closed manifolds,
    see \cite{SE67}. This proves the desired result.
		\subsection*{Case $\frac{n_1}{m_1}> \frac{n_2}{m_2}$ and $\frac{n_1-1}{m_1}=\frac{n_2}{m_2}$}
   We consider the operator
   \[
      C= A_1 \otimes A_2.
   \]
   Clearly $\frac{n_1}{m_1}=\frac{2}{2}=1> \frac{n_2}{m_2}=\frac{1}{2}$ and
   $\frac{n_1-1}{m_1}=\frac{1}{2}=\frac{n_2}{m_2}$ so that we are in second 
   case of Theorem \ref{thm:teoprin}, which now states that
   \[
      N_C(\tau)=  \zeta(A_2, 1) \tau + \,\mathcal{O}\pt{\tau^{1/2} \log \tau}.
   \]
   Using \eqref{eq:specA1} and \eqref{eq:specA2} we obtain explicitly the spectrum of $C$, namely
   \[
      \sigma(C)=\ptg{\pt{k^2-k+1}\pt{n^2+1}\mid \molt{\pt{k^2-k+1}\pt{n^2+1}}= 2(2k+1)}.
   \]
   Therefore, using \eqref{eq:null1}, 
   \begin{align}
   \nonumber
     N_{C}(\tau)&= \sum_{\pt{k^2-k+1}\pt{n^2+1}< \tau}^{n\in \NN, k\in \NN} 2 \pt{2k+1}\\
 		  \nonumber
 		  &=2 \sum_{\pt {k^2-k+1} < \frac{\tau}{ n^2+1}}^{n\in \NN, k\in \NN}  
 		   \molt{k^2+k+1}\\
 		  \nonumber
 		  &=2 \sum_{\pt{n^2+1}<\tau}^{n\in \NN}  N_{A_1} \pt{\frac{\tau}{ n^2+1 } }\\ 		  
 		  \label{eq:OA1b}
 		  &=2 \sum_{n^2+1<\tau}^{n\in \NN} \pt{\frac{\tau}{n^2+1}+ 
 		  R\pt{\frac{\tau}{n^2+1}}}. 		  
    \end{align}
    Let us check directly that
    \begin{align}
      \label{eq:tesiD}
      \limsup_{\tau \to +\infty} \frac{ \abs{N_C(\tau)-  \zeta(A_2, 1)\tau}}{ \tau^{1/2} \log \tau}>0.
    \end{align}
    Using \eqref{eq:OA1b} and \eqref{eq:limbasso} we can write
    \begin{align}
	\nonumber
	&\hspace*{-1cm}
	\limsup_{\tau \to +\infty} \frac{ \abs{N_C(\tau)-  \zeta(A_2, 1)\tau}}{ \tau^{1/2} \log \tau}\\
	\nonumber
	&=\limsup_{\tau \to +\infty} \frac{ \abs{2 \sum_{n^2+1<\tau} 
	\pt{\frac{\tau}{n^2+1}+ 
 		  R\pt{\frac{\tau}{n^2+1}}}- \zeta(A_2,1)\tau} }{\tau^{1/2}\log \tau} \\
 	\nonumber
 	&\geq - \limsup_{\tau \to +\infty} \frac{\tau^{1/2}
 	\pt{\zeta(A_2,1)- 2 \sum_{n^2+1<\tau} \frac{1}{n^2+1}}}{\log \tau}  
 	\\
	\nonumber
	&\phantom{=}+\limsup_{\tau\to +\infty} 
 	\frac{3}{4}\tau^{1/2} \frac{ 2 \sum_{n^2+1<\tau} 
 	\frac{1}{ \pt{n^2+1}^{1/2}}}  {\tau^{1/2}\log \tau }	\\
 	&
 	\label{eq:caso3fin}
 	\geq -\limsup_{\tau \to +\infty} \tau^{\frac{1}{2}}\frac{2 \sum_{n^2+1\geq\tau}
 	\frac{1}{n^2+1}}{\log \tau}+ 
 	\limsup_{\tau \to +\infty}\frac{3}{2} 
 	\frac{\sum_{n^2+1<\tau} \frac{1}{\pt{n^2+1}^{1/2}}}{\log \tau}
    \end{align}
    Finally, using the results of Proposition \ref{pipi:asin} (or directly, by
    integral inequalities), we obtain that
    \begin{align*}
      &\limsup_{\tau \to +\infty}\tau^{\frac{1}{2}} \frac{2 \sum_{n^2+1\geq\tau}
 	\frac{1}{n^2+1}}{\log \tau}
      =\lim_{\tau \to +\infty} \tau^{\frac{1}{2}}\frac{2 \sum_{n^2+1\geq\tau}
 	\frac{1}{n^2+1}}{\log \tau}=0.
    \end{align*}
    Moreover,
    \[
      \limsup_{\tau \to +\infty}\frac{3}{2} 
 	\frac{\sum_{n^2+1<\tau} \frac{1}{n^2+1}}{\log \tau}=\frac{3}{4},
    \]
    so that, by means of \eqref{eq:caso3fin}, the desired result is proven also in this second case.
  \subsection*{Case $\frac{n_1}{m_1}> \frac{n_2}{m_2}$ and $\frac{n_1-1}{m_1}<\frac{n_2}{m_2}$}
    In this situation we consider the operator
    \[
      D= A_1 \otimes A_2^{\frac{3}{4}}.
    \]
    Clearly, $\frac{n_1}{m_1}=\frac{2}{2}=1 > \frac{n_2}{m_2}=\frac{2}{3}$ 
    and $\frac{n_1-1}{m_1}=\frac{1}{2}<  \frac{n_2}{m_2}=\frac{2}{3}$, so we are in the third case of
    Theorem \ref{thm:teoprin}, which implies that
    \begin{align}
      \label{eq:contCth}
      N_D(\tau)=  \zeta\pt{A_2^{\frac{3}{4}},1}\tau+ \mathcal{O}\pt{\tau^{\frac{2}{3}}}.
    \end{align}
    It is immediate to observe that
    \begin{multline}
      \sigma(D)=\large\{
      \pt{k^2+k+1}\pt{n^2+1}^{3/4}\mid\\
      \molt{(k^2+k+1)(n^2+1)^{3/4}}= 2\pt{2k+1}\large\}.
    \end{multline}
    Therefore, using again \eqref{eq:null1}, we obtain
    \begin{align}
    \nonumber
     N_{D}(\tau)&= \sum_{\pt{k^2-k+1}\pt{n^2+1}^{3/4}< \tau}^{n\in \NN, k\in \NN} 2 \pt{2k+1}\\
 		  \nonumber
 		  &=2 \sum_{\pt {k^2-k+1} < \frac{\tau}{ \pt{n^2+1}^{3/4} }}^{n\in \NN, k\in \NN} 
 		   \molt{k^2-k+1}\\
 		  \nonumber
 		  &=2 \sum_{\pt{n^2+1}^{3/4}<\tau }^{n\in \NN} N_{A_1} \pt{\frac{\tau}{ \pt{n^2+1}^{3/4} } }\\
 		  \label{eq:OA1}
 		  &= 2 \sum_{\pt{n^2+1}^{3/4}<\tau}^{n\in \NN}\pt{ \frac{\tau}{\pt{n^2+1}^{3/4}}+ 
 		  R\pt{\frac{\tau}{\pt{n^2+1}^{3/4}}} }. 		  
    \end{align}
    Let us now compute directly
    \[
      \limsup_{\tau\to +\infty}\frac{ \abs{ N_D(\tau)-  \zeta\pt{A_2^{3/4},1}\tau}}{\tau^{2/3}}.
    \]
    By \eqref{eq:OA1}, we find
    \begin{align*}
      &\limsup_{\tau\to +\infty}\frac{ \abs{ N_D(\tau)-  \zeta(A_2^{3/4},1)\tau}}{\tau^{2/3}}\\
      &= \limsup_{\tau\to +\infty} \frac{\abs{2 \sum_{\pt{n^2+1}^{3/4}<\tau}\pt{ \frac{\tau}{\pt{n^2+1}^{3/4}}+ 
 		  R\pt{\frac{\tau}{\pt{n^2+1}^{\frac{3}{4}}}} } -  \zeta\pt{A_2^{3/4},1}\tau } } {\tau^{2/3}}\\
      &= \limsup_{\tau\to +\infty}  {\tau^{-2/3}}\cdot
      \left|2 \sum_{\pt{n^2+1}^{3/4}<\tau}
       \frac{\tau}{\pt{n^2+1}^{3/4}}-  \zeta\pt{A_2^{3/4},1}\tau+ \right.
      \\
      &\phantom{\limsup_{\tau\to +\infty}  {\tau^{-2/3}}\cdot|}
 		  \,\left.+2 \sum_{\pt{n^2+1}^{3/4}<\tau} R\pt{\frac{\tau}{\pt{n^2+1}^{\frac{3}{4}}}}
		  \right|.
    \end{align*}
    We also notice that
    \begin{align*}
      &\lim_{\tau \to +\infty} \frac{\abs{2 \sum_{\pt{n^2+1}^{3/4}<\tau}
       \frac{\tau}{\pt{n^2+1}^{3/4}}-  \zeta\pt{A_2^{3/4},1}\tau}}{\tau^{2/3}}\\
      &\hspace*{1cm}=\lim_{\tau \to +\infty} \frac{ \zeta\pt{A_2^{3/4},1}\tau -2 \sum_{\pt{n^2+1}^{3/4}<\tau}
       \frac{\tau}{\pt{n^2+1}^{3/4}} }{\tau^{2/3}}\\ 
      &\hspace*{1cm}=\lim_{\tau \to +\infty}\;2\; \tau^{1/3}   \sum_{\pt{n^2+1}^{3/4}\geq\tau}
       \frac{1}{\pt{n^2+1}^{3/4} }, 
    \end{align*}
    and that
    \[
         \sum_{\pt{n+1}^{3/2}\geq \tau} \frac{1}{\pt{n+1}^{3/2}} \leq 
         \sum_{\pt{n^2+1}^{3/4}\geq\tau}\frac{1}{\pt{n^2+1}^{3/4}}\leq
         \sum_{n^{3/2}\geq \tau} \frac{1}{n^{3/2}}.
    \]
    Using the standard integral criteria of series convergence, one can easily check that
    \[
      \lim_{\tau \to +\infty } \tau^{1/3}\sum_{\pt{n+1}^{3/2}\geq \tau} \frac{1}{\pt{n+1}^{3/2}} =
      \lim_{\tau \to +\infty }\tau^{1/3}\sum_{n^{3/2}\geq \tau} \frac{1}{n^{3/2}} =2.
    \]
    Hence
    \begin{align}
      \label{eq:prinsomma}
      \lim_{\tau \to +\infty}\;2\; \tau^{1/3}   \sum_{\pt{n^2+1}^{3/4}\geq\tau}
       \frac{1}{\pt{n^2+1}^{3/4} } =4.
    \end{align}
    By a similar argument, we also have that
    \begin{align}
      \label{eq:sommarest}
     \lim_{\tau\to +\infty } \tau^{-1/6}\sum_{\pt{n^2+1}^{3/4}<\tau}\frac{1}{\pt{n^2+1}^{3/8}}= 4.
    \end{align}
    In view of \eqref{eq:limbasso}, \eqref{eq:prinsomma} and \eqref{eq:sommarest} 
    we finally obtain
    \begin{align}
       \nonumber
       &\hspace*{-1cm}
       \limsup_{\tau\to +\infty}\frac{ \abs{ N_D(\tau)-  \zeta(A_2^{3/4},1)\tau}}{\tau^{2/3}}\\
       \nonumber
       &\geq
       \limsup_{\tau\to +\infty}\frac{ { N_D(\tau)-  \zeta(A_2^{3/4},1)\tau}}{\tau^{2/3}}\\
       \nonumber
       &= -\lim_{\tau\to +\infty}\;2\; \tau^{1/3}   \sum_{\pt{n^2+1}^{3/4}\geq\tau}
       \frac{1}{\pt{n^2+1}^{3/4} } 
       \\
       \nonumber
       &\phantom{=}+
       \limsup_{\tau\to +\infty} 2
        \frac{ \sum_{\pt{n^2+1}^{3/4}<\tau} R \pt{ \frac{\tau} 
       {\pt{n^2+1}^{3/4} }  }}   { \tau^{2/3} }\\  
       \nonumber
       &\geq -4 +  \frac{3}{2}\limsup_{\tau\to +\infty} \tau^{-1/6}
       \sum_{\pt{n^2+1}^{3/4}<\tau} \frac{1}{\pt{n^2+1}^{3/8}}   \\
       \label{eq:asintC}
       &\geq -4+6=2>0.
    \end{align}
    Equation \eqref{eq:asintC} proves the desired result also in this last case.

\section{Appendix. The Dirichlet divisors problem}\label{sec:app}
  Counting functions of the type \eqref{eq:countA} suggest a spectral approach to
  a prominent type of lattice problem, the so-called Dirichlet divisors problem. 
  Let us suppose that the spectrum of both $A_1$ and $A_2$ in \eqref{eq:countA} is formed
  by all strictly positive natural numbers, each with multiplicity one. Then,
  \[
    N_{A}(\tau)= \sum_{n\cdot m< \tau} 1 = D(\tau).
  \]
  The function $D(\tau)$ is called Dirichlet divisor summatory function and it 
  is straightforward to check that it amounts the number of points with integer 
  coordinates belonging to the first quadrant of the Cartesian plane which lie below the hyperbola $x y= \tau$.
  In 1849, Dirichlet proved that
  \begin{equation}
    \label{eq:dirc}
    D(\tau)= \tau \log  \tau + (2 \gamma -1)\tau+ \mathcal{O}(\tau^{1/2}),
  \end{equation}
  where $\gamma$ is the Euler-Mascheroni constant, namely
  \[
    \gamma= \lim_{\tau\to +\infty} \pt{\sum_{0<n<\tau}\frac{1}{n} -\int_0^\tau \frac{1}{x}dx},
  \]
  or, equivalently,
  \[
    \gamma=\lim_{z\to 1}\pt{z-1}\zeta_R(z),
  \]
  where $\zeta_R(z)$ is the Riemann $\zeta$-function.
  Several papers aimed at finding the sharp remainder term in \eqref{eq:dirc}, see \cite{IKKN06}
  for an overview on this type of problems. Hardy, in \cite{HA16},
  proved that $\mathcal{O}(\tau^{\frac{1}{4}})$ is a lower bound for the remainder in \eqref{eq:dirc}.
  It is conjectured that the sharp estimate  in this case is $\mathcal{O}(\tau^{\frac{1}{4}+\epsilon})$
  or, more precisely, $\mathcal{O}\pt{\tau^{1/4}\log \tau }$.
  The best known result, due  to Huxley, is that the remainder is
  $\mathcal{O}(\tau^\alpha \pt{\log \tau}^{\beta+1})$, where
    \begin{align*}
   \alpha= \frac{131}{416}\sim 0,3149\ldots \quad \beta=\frac{18627}{8320}\sim 2,2513\ldots.
  \end{align*}
  In order to have a spectral interpretation of the Dirichlet divisor problem, a global bisingular calculus based on Shubin calculus has been introduced in \cite{BGPR13}.
  Then, the following Hermite-type operator  
  \[
   H_j= \frac{1}{2}\pt{-\partial_{x_j}^2+ x_j^2} +\frac{1}{2}, \quad j=1,2,
  \]
  has been examined.
  Using Hermite polynomials, it turns out that $\sigma(H_j)=\{n\}_{n\in \NN^*}$, $j=1,2$, 
  and each eigenvalue has 
  multiplicity one. Therefore
  $\sigma(H_1 \otimes H_2)=\ptg{n\cdot m}_{(n,m)\in (\NN^*)^2}$ and
  \[
    N_{H_1 \otimes H_2}(\tau)= D(\tau).
  \]
  This clear spectral meaning of the Dirichlet divisor problem was one of the main motivation
  of the papers \cite{BGPR13,GPRV14}. For the connection between Dirichlet divisor problem and 
  standard bisingular operators on  the product of closed manifolds see \cite{BA12}.
  Actually, since we deal with the non-symmetric case,
  it is not possible to attack directly the traditional Dirichlet divisor problem through the approach described in the
  previous sections, while our techniques
  are well suited to treat generalized anisotropic Dirichlet divisors problems like, for instance,
  \[
    N_{H_1^{\alpha}\otimes H_2^{\beta}} (\tau)=\sum_{ n^{\alpha} \cdot m^{\beta}<\tau}1, \quad \alpha\neq \beta.
  \]
  In \cite{GPRV14} it is proven that
  \begin{equation}
    \label{eq:controrod}
    N_{H_1^{\alpha}\otimes H_2^{\beta}}\pt{\tau}= 
    \zeta\pt{\frac{\alpha}{\beta}}\tau^{\frac{1}{\beta}}+\zeta\pt{\frac{\beta}{\alpha}}\tau^{\frac{1}{\alpha}}+
    \mathcal{O}\pt{\tau^{\frac{1}{\alpha+\beta}}},
  \end{equation}
  where $\zeta$ is the meromorphic continuation of the Riemann $\zeta$-function.
  Notice that \eqref{eq:controrod} proves the sharpness of the result stated in Theorem \ref{thm:teoprinbis} in the case $\frac{2n_2}{m_2}>\frac{2n_1-1}{m_1}$.
  
%


\begin{thebibliography}{BGRP13}

\bibitem[ANPS09]{ANS09}
Wolfgang Arendt, Robin Nittka, Wolfgang Peter, and Frank Steiner, \emph{Weyl
  law: Spectral properties of the laplacian in mathematics and physics}, In: Mathematical Analysis of Evolution, Information, and Complexity, Wiley-VCH Verlag GmbH Co. KGaA, Weinheim, (2009).

\bibitem[Ara88]{AR88}
Junichi Aramaki, \emph{On an extension of the {I}kehara {T}auberian theorem},
  Pacific J. Math. \textbf{133} (1988), no.~1, 13--30.

\bibitem[AS68]{AS68}
M.~F. Atiyah and I.~M. Singer, \emph{The index of elliptic operators. {I}},
  Ann. Math. (2) \textbf{87} (1968), 484--530. 

\bibitem[Bat12]{BA12}
Ubertino Battisti, \emph{Weyl asymptotics of bisingular operators and
  {D}irichlet divisor problem}, Math. Z. \textbf{272} (2012), no.~3-4,
  1365--1381. 

\bibitem[BC11]{BC11}
Ubertino Battisti and Sandro Coriasco, \emph{Wodzicki residue for operators on
  manifolds with cylindrical ends}, Ann. Global Anal. Geom. \textbf{40} (2011),
  no.~2, 223--249.

\bibitem[BGRP13]{BGPR13}
U.~Battisti, T.~Gramchev, L.~Rodino, and S.~Pilipovi{\'c}, \emph{Globally
  bisingular elliptic operators}. In: Operator Theory, Pseudo-differential
  equations, and Mathematical Physics, Operator Theory: Advanced and Applications, vol. 228,
  Birkh\"auser/Springer Basel AG, Basel, 2013, pp.~21--38. 

\bibitem[BN03]{BN03}
Paolo Boggiatto and Fabio Nicola, \emph{Non-commutative residues for
  anisotropic pseudo-differential operators in {$\mathbb R\sp n$}}, J. Funct.
  Anal. \textbf{203} (2003), no.~2, 305--320. 

\bibitem[CM13]{CM13}
Sandro Coriasco and Lidia Maniccia, \emph{On the spectral asymptotics of
  operators on manifolds with ends}, Abstr. Appl. Anal., vol. 2013, (2013), ID 909782,
  21. 

\bibitem[DD13]{DD13}
Kiril Datchev and Semyon Dyatlov, \emph{Fractal {W}eyl laws for asymptotically
  hyperbolic manifolds}, Geom. Funct. Anal. \textbf{23} (2013), no.~4,
  1145--1206.

\bibitem[GL02]{LO02}
Juan~B. Gil and Paul~A. Loya, \emph{On the noncommutative residue and the heat
  trace expansion on conic manifolds}, Manuscripta Math. \textbf{109} (2002),
  no.~3, 309--327. 

\bibitem[GPRVar]{GPRV14}
Todor Gramchev, Stevan Pilipovi\'c, Luigi Rodino, and Jasson Vindas, \emph{Weyl
  asymptotics for tensor products of operators and dirichlet divisors}, Ann.
  Mat. Pura Appl. (2014), doi: 10.1007/s10231-014-0400-z.

\bibitem[GS94]{GS94}
Alain Grigis and Johannes Sj{\"o}strand, \emph{Microlocal analysis for
  differential operators. An Introduction}. In: London Mathematical Society Lecture Note Series,
  vol. 196, Cambridge University Press, Cambridge, 1994, An introduction.
  

\bibitem[Gui85]{GU85}
Victor Guillemin, \emph{A new proof of {W}eyl's formula on the asymptotic
  distribution of eigenvalues}, Adv. in Math. \textbf{55} (1985), no.~2,
  131--160. 

\bibitem[Har16]{HA16}
G.~H. Hardy, \emph{On {D}irichlet's {D}ivisor {P}roblem}, Proc. London Math.
  Soc. (2) \textbf{15} (1916), 1--25.

\bibitem[H{\"o}r68]{HO68}
Lars H{\"o}rmander, \emph{The spectral function of an elliptic operator}, Acta
  Math. \textbf{121} (1968), 193--218. 

\bibitem[H{\"o}r07]{HO04}
\bysame, \emph{The analysis of linear partial differential operators. {IV}: Fourier Integral Operators}. In:
  Classics in Mathematics, Springer, Berlin, (2007), Reprint of the 1994 edition. 

  
\bibitem[He84]{He84}  
B.~Helffer, \emph{Th\'eorie spectrale pour des op\'erateurs globalement
              elliptiques}, Ast\'erisque (112), Soci\'et\'e Math\'ematique de France, Paris,
              1984.
  
\bibitem[HR81]{HL81}
B.~Helffer and D.~Robert, \emph{Comportement asymptotique pr\'ecise du spectre
  d'op\'erateurs globalement elliptiques dans {${\bf R}^{n}$}},
  Goulaouic-{M}eyer-{S}chwartz {S}eminar, 1980--1981, \'Ecole Polytech.,
  Palaiseau, 1981, pp.~Exp. No. II, 23. 

\bibitem[IKKN06]{IKKN06}
A.~Ivi{\'c}, E.~Kr{\"a}tzel, M.~K{\"u}hleitner, and W.~G. Nowak, \emph{Lattice
  points in large regions and related arithmetic functions: recent developments
  in a very classic topic}, Elementare und analytische {Z}ahlentheorie, Schr.
  Wiss. Ges. Johann Wolfgang Goethe Univ. Frankfurt am Main, 20, Franz Steiner
  Verlag Stuttgart, Stuttgart, 2006, pp.~89--128. 

\bibitem[Mor08]{MO08}
Sergiu Moroianu, \emph{Weyl laws on open manifolds}, Math. Ann. \textbf{340}
  (2008), no.~1, 1--21. 

\bibitem[Nic03]{NI03}
Fabio Nicola, \emph{Trace functionals for a class of pseudo-differential
  operators in {$\mathbb R\sp n$}}, Math. Phys. Anal. Geom. \textbf{6} (2003),
  no.~1, 89--105. 

\bibitem[NR06]{NR06}
Fabio Nicola and Luigi Rodino, \emph{Residues and index for bisingular
  operators}, {$C^\ast$}-algebras and elliptic theory, Trends Math.,
  Birkh\"auser, Basel, 2006, pp.~187--202.

\bibitem[Rod75]{RO75}
Luigi Rodino, \emph{A class of pseudo differential operators on the product of
  two manifolds and applications}, Ann. Scuola Norm. Sup. Pisa Cl. Sci. (4)
  \textbf{2} (1975), no.~2, 287--302.

\bibitem[See67]{SE67}
R.~T. Seeley, \emph{Complex powers of an elliptic operator}, Singular
  {I}ntegrals ({P}roc. {S}ympos. {P}ure {M}ath., {C}hicago, {I}ll., 1966),
  Amer. Math. Soc., Providence, R.I., 1967, pp.~288--307. 

\bibitem[Shu87]{SH87}
M.~A. Shubin, \emph{Pseudodifferential operators and spectral theory}, Springer
  Series in Soviet Mathematics, Springer-Verlag, Berlin, 1987, Translated from
  the Russian by Stig I. Andersson. 

\bibitem[SV97]{SV97}
Yu. Safarov and D.~Vassiliev, \emph{The asymptotic distribution of eigenvalues
  of partial differential operators}. In: Translations of Mathematical Monographs,
  vol. 155, American Mathematical Society, Providence, RI, 1997, Translated
  from the Russian manuscript by the authors. 

\end{thebibliography}

\providecommand{\bysame}{\leavevmode\hbox to3em{\hrulefill}\thinspace}
\providecommand{\MR}{\relax\ifhmode\unskip\space\fi MR }
\providecommand{\MRhref}[2]{%
  \href{http://www.ams.org/mathscinet-getitem?mr=#1}{#2}
}
\providecommand{\href}[2]{#2}

\end{document}